\newtheorem{defn}{Definition}[section]
\newtheorem{lem}[defn]{Lemma}
\newtheorem{thm}[defn]{Theorem}
\newtheorem{cor}[defn]{Corollary}
\newtheorem{rem}[defn]{Remark}
\newtheorem {conj}[defn]{Conjecture}
\newcommand {\BL}{{\rm L}}
\newcommand {\ZZ}{{\mathbb Z}}
\newcommand {\XX}{{\mathcal X}}
\newcommand {\K}{{\rm K}}
\newcommand {\E}{{\mathcal E}}
\newcommand {\A}{{\mathcal A}}
\newcommand {\AF}{{\mathbb A}}
\newcommand {\CC}{{\mathcal C}}
\newcommand {\Q}{{\mathbb Q}}
\newcommand {\R}{{\mathbb R}}
\newcommand {\OO}{{\mathcal O}}
\newcommand {\HH}{{\mathcal  H}}
\newcommand {\LL}{{\mathcal L}}
\newcommand {\M}{{\mathcal M}}
\newcommand {\V}{{\bf  V}}
\newcommand {\CP}{{\mathbb P}}
\newcommand {\D}{{\mathcal D}}
\newcommand {\TCC}{\tilde{\mathcal C}}
\newcommand {\LD}{{{\mathcal L}_{\Delta}}}
\newcommand {\QD}{Q^{\Delta}}
\newcommand {\TK}{{\tilde{K}}}
\newcommand{\TC}{\tilde{C}}
\newcommand {\TQ}{{\tilde{Q}^{\Delta}}}
\newcommand {\TP}{{\tilde{P}}}
\newcommand {\TQQ}{{\tilde{\mathcal Q}}}
\newcommand {\TFF}{{\tilde{\mathcal F}}}
\newcommand {\FF}{{\mathcal F}}
\def\pic{\operatorname{\rho}}
\def\Pic{\operatorname{Pic}}
\def\Ker{\operatorname{Ker}}
\def\End{\operatorname{End}}
\def\div{\operatorname{div}}
\def\deg{\operatorname{deg}}
\def\Spec{\operatorname{Spec}}
\def\ord{\operatorname{ord}}
\title{Higher Chow cycles on Abelian surfaces and a non-Archimedean analogue of the  Hodge-$\D$-conjecture }
\author{Ramesh Sreekantan}
\begin{document}

\maketitle
\begin{abstract}

 We construct new indecomposable elements in the higher Chow group $CH^2(A,1)$ of a principally polarized Abelian surface over a  $p$-adic local field, which generalize an element constructed by Collino \cite{coll}. These elements are constructed using a generalization, due to Birkenhake and Wilhelm \cite{biwi}, of a classical construction of Humbert. They can be used to prove a non-Archimedean analogue of the Hodge-$\D$-conjecture - namely, the surjectivity of the boundary map in the localization sequence - in the case when the Abelian surface has good and ordinary reduction.

\end{abstract}


\section{Introduction}

The aim of this paper is to prove a non-Archimedean analogue of the Hodge-$\D$-conjecture for Abelian surfaces. This conjecture asserts that the boundary map in the localization sequence of higher Chow groups is surjective. If an $S$-integral version of the Beilinson conjectures were known this would be a consequence of them -- but since they seem a little out of reach at the moment it is of interest to prove this weaker statement. 

The conjecture is the following -- 

\begin{conj} Let $X$ be a projective scheme over a global field  $\K$. Let $p$ be a prime in $\OO_{\K}$. We think of $X$ as a variety over $\K_p$, the completion at $p$. Let $\XX$ be a model over the ring of integers $\OO_{\K_p}$  of $\K_p$  and let $\XX_p$ be the special fibre. We assume $\XX_p$ is smooth - that is - $X$ has good reduction at $p$. 

For $m,n \geq 1$ let 
$$\Sigma^{m,n}_{X}:=\Ker\{CH^{m}(\XX,n-1) \longrightarrow CH^{m}(X,n-1)\}.$$
Then the map --
$$CH^m(X,n)\otimes \Q \stackrel{\partial}{\longrightarrow} CH^{m-1}(\XX_p,n-1)\otimes \Q$$
is surjective and $\Sigma_X^{m,n}$ is finite.
\end{conj}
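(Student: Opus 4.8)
The plan is to prove the conjecture in the case this paper addresses: $X=A$ a principally polarized abelian surface over $K_p$ with good ordinary reduction, and $(m,n)=(2,1)$, where the target is $CH^1(\A_p,0)\otimes\Q=\Pic(\A_p)\otimes\Q$. Since $\A_p$ is an abelian surface over a finite residue field, $\Pic^0(\A_p)$ is finite, so this target is $\mathrm{NS}(\A_p)\otimes\Q$. First I would note that both assertions are governed by a single map. Writing the localization sequence of the model $\A/\OO_{K_p}$, with open generic fibre $A$ and closed special fibre $\A_p=\div(\pi)$,
\begin{equation*}
CH^2(A,1)\xrightarrow{\ \partial\ } CH^1(\A_p,0)\xrightarrow{\ i_*\ } CH^2(\A,0)\xrightarrow{\ j^*\ } CH^2(A,0)\to 0,
\end{equation*}
exactness gives $\Im(\partial)=\Ker(i_*)$ and $\Sigma^{2,1}_A=\Ker(j^*)=\Im(i_*)$. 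Hence, after $\otimes\,\Q$, surjectivity of $\partial$ and the vanishing of the rank of $\Sigma^{2,1}_A$ are both equivalent to the single statement $i_*\otimes\Q=0$: every divisor class on $\A_p$, pushed into the threefold $\A$ as a $1$-cycle, is rationally trivial. Since $\Pic(\A_p)$ is finitely generated, once $i_*\otimes\Q=0$ the group $\Sigma^{2,1}_A=\Im(i_*)$ is finitely generated and torsion, hence finite; so finiteness of $\Sigma$ is a formal consequence of the surjectivity.

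Next I would isolate which classes require work. If $\delta\in\mathrm{NS}(A)$ and $\mathcal D$ is the closure in $\A$ of a representing divisor, then the specialization $\mathrm{sp}(\delta)=[\mathcal D\cap\A_p]$ satisfies $i_*(\mathrm{sp}(\delta))=[\mathcal D\cdot\div(\pi)]=[\div_{\mathcal D}(\pi)]=0$ in $CH_1(\A)$, because $\A_p=\div(\pi)$ and $\pi|_{\mathcal D}$ is a rational function on the surface $\mathcal D$. Thus $\Im(\mathrm{sp})\subseteq\Ker(i_*)=\Im(\partial)$ automatically, and the content of the theorem lives entirely in the cokernel of $\mathrm{sp}\colon \mathrm{NS}(A)\otimes\Q\to\mathrm{NS}(\A_p)\otimes\Q$, the extra Néron–Severi classes appearing only on the special fibre. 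Under good ordinary reduction, Tate's theory allows $\End(\A_p)$ to be strictly larger than $\End(A)$, and the extra symmetric endomorphisms produce extra symmetric classes in $\mathrm{NS}(\A_p)$: these are the Humbert classes. My task reduces to producing, for each such class, an explicit element of $CH^2(A,1)$ mapping to it under $\partial$.

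To build these elements I would use the Birkenhake–Wilhelm generalization \cite{biwi} of Humbert's construction, which attaches to the relevant symmetric data a curve (or configuration) $C\subset A$ of Humbert type, generalizing the Jacobian situation of Collino \cite{coll}, where $C$ and its inverse image $-C$ meet at the $2$-torsion points. Concretely I would take such a curve $C$ together with a companion $C'$ meeting it in a controlled finite set, and rational functions $f\in k(C)^*$, $g\in k(C')^*$ whose divisors are supported on and cancel along $C\cap C'$, so that $\xi=(C,f)+(C',g)+\cdots$ defines a class in $CH^2(A,1)$. Choosing $C,C'$ and the functions to have good reduction, the boundary $\partial\xi$ is computed by spreading the components out to their closures in $\A$, specializing, and taking the divisor of the reduced tame symbol along $\A_p$; the output should be a nonzero rational multiple of the intended extra Humbert class.

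The hard part will be this boundary computation and the verification that it lands on the desired class. Two points must be controlled. First, the reduction behaviour must be nondegenerate: the intersection points $C\cap C'$ must remain distinct in the special fibre (or their merging understood), which is exactly where ordinarity is essential, since it forces the reduction to acquire precisely the extra endomorphisms carrying the Humbert curves. Second, one must identify the resulting class in $\mathrm{NS}(\A_p)\otimes\Q$ and check that, as the Humbert data varies, these classes together with $\mathrm{sp}(\mathrm{NS}(A))$ span all of $\mathrm{NS}(\A_p)\otimes\Q$ — a rank count on the special fibre, again governed by the ordinary reduction hypothesis. Once every generator of $\mathrm{NS}(\A_p)\otimes\Q$ is exhibited in $\Im(\partial)$, we conclude $i_*\otimes\Q=0$, which simultaneously yields the surjectivity of $\partial$ and the finiteness of $\Sigma^{2,1}_A$.
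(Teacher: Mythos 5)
Your formal reductions are sound and agree with the paper's framing: finiteness of $\Sigma^{2,1}_A$ follows from surjectivity of $\partial$ (the paper invokes Spiess for this), the target is $NS(\A_p)\otimes \Q$, and the real content is hitting the classes in the cokernel of the specialization map. The gap is in your third paragraph, and it is exactly the gap the paper exists to fill. You propose to take Humbert-type curves $C, C'$ \emph{on the generic fibre} $A$ and form a Collino-style element $(C,f)+(C',g)$. But in the essential case $NS(A)\otimes\Q\simeq\Q$: the Humbert curves attached to the extra real multiplication live only on the special fibre $\A_p$ and do not lift to $A$. There is no curve of Humbert type on $A$ to which your recipe could be applied, and no care with ``choosing $C,C'$ and the functions to have good reduction'' can produce one --- any cycle supported on curves that exist on $A$ specializes into the image of $NS(A)\otimes\Q$, which misses the class you need. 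The paper's route around this is the actual core of the argument: descend the two conjugate Humbert curves $D_1, D_2\subset \A_p$ to rational curves $Q_1, Q_2$ on the Kummer surface via Birkenhake--Wilhelm, verify that their strict transforms on the associated $K3$ surface form a \emph{connected} union of rational curves in $|\TCC_0^N|$, and then apply the mixed-characteristic deformation theorem of Bogomolov--Hassett--Tschinkel combined with Ogus's result on versal deformation spaces of ordinary $K3$ surfaces to deform $\TQ_1\cup\TQ_2$ to an irreducible nodal rational curve $\TQ$ on the generic fibre. The higher Chow cycle is then built from a node of $\TQ$ --- a function on the normalization with divisor $P_1-P_2$ over the node --- not from a pair of curves meeting at $2$-torsion, and its boundary is identified with a nonzero multiple of $D_1$ by intersection theory on the arithmetic surface.

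Relatedly, you misattribute the role of ordinarity. It is not needed to ``force the reduction to acquire the extra endomorphisms'' --- Tate's theorem supplies those for any abelian surface over a finite field --- nor to keep intersection points distinct. It enters precisely at the deformation step (Ogus's Proposition 1.6 is what lets one drop the ampleness hypothesis in the Bogomolov--Hassett--Tschinkel theorem for ordinary $K3$ surfaces) and in excluding the supersingular case $\pic(\A_p)=6$. Without the passage to the Kummer $K3$ and the deformation of the special-fibre configuration to the generic fibre, your outline does not produce any element of $CH^2(A,1)$ whose boundary is the extra N\'eron--Severi class, so the argument does not go through as written.
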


In fact, one can formulate this conjecture more generally  for  primes $p$  of semi-stable reduction \cite{sree2} - but in this paper we will only deal with primes of good reduction. We can also consider this conjecture for $X$ over a global field itself - looking at the boundary map for {\em all} primes - but that is a much harder question. 

In the local case the surjectivity of $\partial$ alone is sufficient to show $\Sigma_X$ is finite, but  in the global case it only shows that $\Sigma_X$ is torsion.  

 A few  cases of this conjecture, both the local and global version, are known --

\begin{itemize}

 \item  $m=n=1$ and $X=Spec(\K)$ where $\K$ is a global or local field. Here  $\XX=\Spec(\OO_{\K})$ is the ring of integers, $CH^1(X,1)=\K^*$  and $CH^0(\XX_p,0)=CH^0(\XX_p) \simeq \ZZ$. The conjecture is trivially true in the local case and follows from the finiteness of class number in the global case. In fact, the group $\Sigma_X^{1,1}$ is the class group.

 \item $m=k, n=2k-1$, $k > 1$ and $X=Spec(\K)$ where $\K$ is a global or local field. This is an immediate consequence of the results of Quillen on the $K$-theory finite fields - as he showed  that the higher  $K$-groups of finite fields are finite.

 \item  $m=2$, $n=1$ and $X=E \times E$ where $E$ is an elliptic curve over $\Q$ and $p$ a prime of good reduction. Mildenhall \cite{mild} and Flach \cite{flac} independently showed  that this map is surjective in this case. Mildenhall further showed that $\Sigma^{2,1}_X$ is finite  when $E$ is a CM elliptic curve over $\Q$. Finiteness of $\Sigma^{2,1}_X$ in general is still not known. 

 \item  $m=2$, $n=1$ and $X=E_1 \times E_2$ where $E_1$ and $E_2$  are elliptic curves over a local field $\K_p$ with good reduction at $p$. Spiess \cite{spie} showed that this map is surjective in this case.

 \item  $m=2$, $n=1$ and $X=E_1 \times E_2$ where $E_1$ and $E_2$  are elliptic curves over a local field $\K_p$ with bad semi-stable  reduction at $p$. In \cite{sree2} we generalized Spiess' work to the case of semi-stable reduction. Here we had to use a semi-stable model $\XX$ of $E_1 \times E_2$ 
 
 \item When $n>1$ and $p$  a prime of good reduction this conjecture is a consequence of a conjecture of Parshin and Soul\'{e} which asserts that the higher Chow groups of a smooth projective variety over a finite field is torsion. In particular, in the cases when their conjecture is known, this conjecture follows. 
 
 \item One can also formulate a function field variant of this conjecture - in fact this is used to formulate the Beilinson conjectures in that setup. A special case is discussed in \cite{sree3}. 
 
 \item When $n=1$ it is sometimes  the case that all the elements of  $CH^{m-1}(\XX_p)$ are restrictions of elements of $CH^{m-1}(\XX)$ - for example when $X=\CP^k$. In these cases the conjecture is easily shown to the true as one can construct `decomposable' elements in $CH^m(X,1)$ coming from the product 
 $$CH^{m-1}(X) \otimes CH^1(X,1) \rightarrow  CH^m(X,1)$$
 which can be used to prove surjectivity.  
 
 \end{itemize}
 
 The `Archimedean' Hodge-$\D$-conjecture of Beilinson asserts that the regulator map to Deligne cohomology is surjective \cite{jann}. 
 $$ CH^{m}(X,n) \otimes \R \stackrel{r_{\D}}{\longrightarrow} H^{2m-n}_{\D}(X,\R(m)).$$
This is false in general but was proved for  $K3$ surfaces and Abelian surfaces by Chen and Lewis \cite{chle}. It is still expected to be true if $X$ is defined over a number field. In fact Asakura and Saito \cite{assa} show that for certain generic surfaces a non-Archimedean version of this is false over a $p$-adic field as well. In this case too, however,  one  expects the conjecture to be true for varieties defined over global fields. This is why in the statement of the conjecture one has to assume $X$ is defined over a global field. 

Since the boundary map $\partial$  is a non-Archimedean version of the Beilinson regulator map sometimes we refer to it as the non-Archimedean regulator map.  There are other reasons as well - the dimension of the target space is expected to be the same as the order of the pole  of the local $L$-factor at $p$ at a particular point (\cite{cons}) inasmuch as the target of the Beilinson regulator map, namely the Real Deligne cohomology, has dimension equal to the order of the pole of the Archimedian local $L$-factor at a particular point. 
Our conjecture  -- which amounts to surjectivity of the boundary map -- is sometimes referred to as the {\em non-Archimedean Hodge-$\D$-conjecture}.

We prove the following theorem -- 

\begin{thm} Let $A$ be simple, principally polarized, Abelian surface over a p-adic local field  $\K_p$. Let $\A$ be the N\'{e}ron model of $A$ over  $\OO_{{\K_p}}$ where $p \neq 2$ is an odd prime of good non-supersingular reduction. Let $\A_p$ be the special fibre. Let 
$$\Sigma_{A}=\Ker \{CH^2(\A) \longrightarrow CH^2(A)\}$$
Then,  
\begin{itemize}

\item $\Sigma_A$ is a finite $p$-group. 

\item The boundary map
$$CH^{2}(A,1) \otimes \Q \stackrel{\partial}{\longrightarrow} CH^1(\A_p) \otimes \Q$$
is surjective.
\end{itemize}
\end{thm}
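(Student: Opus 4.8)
The plan is to extract everything from the tail of the localization sequence
$$ CH^2(A,1) \stackrel{\partial}{\longrightarrow} CH^1(\A_p) \stackrel{i_*}{\longrightarrow} CH^2(\A) \longrightarrow CH^2(A) \longrightarrow 0, $$
in which $\Sigma_A$ is exactly the image of the Gysin map $i_*$, so that by exactness $\Sigma_A=\mathrm{coker}(\partial)$ and surjectivity of $\partial\otimes\Q$ is equivalent to $i_*\otimes\Q=0$. First I would reduce the target to the N\'eron--Severi group: since $\A_p$ is a smooth abelian surface over the finite residue field, $\Pic^0(\A_p)$ is finite, whence $CH^1(\A_p)\otimes\Q=\mathrm{NS}(\A_p)\otimes\Q$. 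The polarization class lifts to $A$, hence to a global divisor class on $\A$, and any such class dies under $i_*$ because $[\A_p]=\div(\pi)$ is rationally trivial; thus the part of $\mathrm{NS}(\A_p)$ inherited from $\mathrm{NS}(A)$ lies automatically in $\Im(\partial)$. The real content is to hit the \emph{new} classes in $\mathrm{NS}(\A_p)\otimes\Q$ that do not come from $\mathrm{NS}(A)$, which exist precisely because reduction introduces extra symmetric correspondences on the special fibre.

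Second, I would pin down $\mathrm{NS}(\A_p)\otimes\Q$ using ordinarity and Tate's theorem: its rank is that of the Rosati-symmetric part of $\End(\A_p)\otimes\Q$, and the extra generators correspond to the new correspondences appearing in the ordinary reduction. For each such new class I would produce an explicit element of $CH^2(A,1)$ realizing it, and this is where the Humbert / Birkenhake--Wilhelm construction \cite{biwi} enters, generalizing Collino \cite{coll}. The symmetric (Humbert) curves on $A$, phrased most cleanly on the Kummer surface $\mathrm{Km}(A)$, come in pairs $H, H'$ meeting in a controlled finite set of points; choosing functions $f\in\K(H)^*$ and $f'\in\K(H')^*$ with $\div(f)+\div(f')=0$ supported on $H\cap H'$ yields a cycle $\{(H,f),(H',f')\}\in CH^2(A,1)$.

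Third, I would compute the boundary of these cycles. Writing $\mathcal H$ for the closure of $H$ in $\A$ and $\tilde f$ for the extension of $f$, the cocycle condition forces the horizontal parts to cancel, so $\partial$ reduces to the vertical part $\sum_V \ord_V(\tilde f)[V]$ of $\div_{\mathcal H}(\tilde f)$, namely a combination of the reductions of the Humbert curves. The task is then to show that, as $H,H'$ range over the Birkenhake--Wilhelm families, these vertical divisors span the new part of $\mathrm{NS}(\A_p)\otimes\Q$. Here good \emph{ordinary} reduction is essential: it guarantees that the special curves and the relevant torsion/CM points reduce without collision, that the curves stay distinct on $\A_p$, and that the specialized classes are independent modulo the lifted polarization. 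I expect this spanning-and-independence statement---matching the explicitly computed boundaries against a basis of the new N\'eron--Severi classes---to be the principal obstacle, since it fuses the geometry of the Humbert curves with a delicate reduction analysis.

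Finally, for the first bullet: once $\partial\otimes\Q$ is surjective, $\Sigma_A$ is torsion, and being a subquotient of the finitely generated group $CH^1(\A_p)$ it is finite. To see it is a $p$-group I would compare with the \'etale realizations for $\ell\neq p$: the boundary is compatible with \'etale cycle-class maps, and by the Weil bounds together with the unit-root structure of the ordinary reduction the prime-to-$p$ part of $i_*$ vanishes, forcing the prime-to-$p$ part of $\Sigma_A$ to be trivial. The residual $p$-power torsion is exactly what these $\ell$-adic arguments cannot see, which is why $\Sigma_A$ is a finite $p$-group.
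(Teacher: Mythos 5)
Your skeleton (localization sequence, reduction to $NS(\A_p)\otimes\Q$, decomposable elements handling the lifted classes, Humbert/Birkenhake--Wilhelm geometry for the new classes) matches the paper's, but there is a genuine gap at the central step, and it is the step the whole paper is about. You write that ``the symmetric (Humbert) curves on $A$ \ldots come in pairs $H,H'$'' and propose to build the cycle $\{(H,f),(H',f')\}$ directly from them. But in the essential case $\pic(A)=1$, $\pic(\A_p)=2$, the Humbert curves $D_1,D_2$ (equivalently the rational curves $Q_1,Q_2$ on the Kummer surface) exist \emph{only on the special fibre}: they represent the class coming from the real multiplication that $\A_p$ acquires upon reduction and that $A$ does not have. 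There are no such curves on the generic fibre to support your functions $f,f'$, so the construction as stated cannot start. The paper's resolution is to pass to the associated $K3$ surface, show $\TQ_1\cup\TQ_2$ is connected and lies in $|\TCC_0^N|$, and then invoke Bogomolov--Hassett--Tschinkel's Theorem 18 together with Ogus's Proposition 1.6 to \emph{deform} this union of rational curves in the special fibre to a nodal rational curve $\TQ$ on the generic fibre; the node (after a singularity analysis ruling out cusps) supplies the function with divisor $P_1-P_2$, and an arithmetic-surface intersection computation shows the vertical part of its divisor is a nonzero multiple of $\TQ_1^P$ modulo decomposables. This deformation step is also where ordinarity is actually used (it is the hypothesis of Ogus's result on the versal deformation space), not, as you suggest, to keep torsion points from colliding under reduction.

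Your treatment of the first bullet also diverges from the paper: rather than an $\ell$-adic cycle-class argument, the paper simply quotes Spiess (Section 4), where surjectivity of $\partial\otimes\Q$ plus the Tate conjecture for abelian surfaces over finite fields yields that $\Sigma_A$ is a finite $p$-group. Your proposed route may well be workable, but as written it is a sketch of the same input Spiess already packages. The substantive missing idea remains the lifting of the special-fibre rational curves to the generic fibre of the Kummer $K3$; without it the ``spanning'' problem you flag as the principal obstacle cannot even be posed, because there is nothing on the generic fibre whose boundary could be matched against the new N\'eron--Severi classes.
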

Our theorem is hence a non-Archimedean version of the theorem of Chen and Lewis \cite{chle}.

 The outline of the paper is as follows -- from Spiess \cite{spie}, Section 4, it suffices to prove surjectivity of the boundary map -- as that implies finiteness of $\Sigma_A$. He shows that it is a $p$-group as a consequence of the finiteness. In order to prove surjectivity, we first need to get an understanding of the target space $CH^1(\A_p) \otimes \Q$ or equivalently, the N\'eron-Severi group of the special fibre $\A_p$.  It is often the case that the rank of the N\'{e}ron-Severi group of the special fibre $\A_p$ is greater than than the rank of the N\'{e}ron-Severi group  of $A$. Hence there could be  new cycles in the special fibre which do not come from the restriction of cycles in the total space.  We first describe these cycles geometrically. 
 
In order to do this we have to go to the associated Kummer surface and Kummer plane of the special fibre. This is where we use the assumption on odd characteristic. We then use a theorem of Birkenhake and Wilhelm which shows that a  new cycle corresponds to a particular rational curve on the Kummer plane. 
 
This rational curve pulls back to the union  of two rational curves on the associated Kummer $K3$ surface.  We then use a slight generalization of the work of Bogomolov-Hassett-Tschinkel \cite{BHT} [Theorem 18], to deform this sum of rational curves - which lie in the special fibre of the associated Kummer $K3$ surface of $A$ -  to an irreducible rational curve in  the generic fibre. This is where we have to assume that the reduction is non-supersingular. 

Finally, we use this deformed curve to construct an indecomposable higher Chow cycle in the $K3$ surface which can be transferred to the original Abelian surface.  We then show that this higher Chow cycle bounds the new cycle. 

Constructing `interesting' higher Chow cycles on algebraic varieties is often difficult and finding them often has several implications. In the last section we state some immediate applications of these cycles to torsion in codimension $2$. We also discuss some related questions.

{\em Acknowledgements:}. I would like to thank Spencer Bloch and Chad Schoen for some initial discussions which led to this problem.  Brendan Hassett for some very useful discussions on his results. Najmuddin Fakhruddin for his invaluable help, suggestions  and the proof of Theorem \ref{BHTthm}.  Patrick Brosnan, Owen Patashnick, G.V. Ravindra and Jishnu Biswas for some useful conversations. I would also like to thank their referees for their comments and suggestions.  I would like to thank the Indian Statistical Institute, Bangalore, CRM Montreal,  Universit\'e  Paris-Sud (Orsay) and Institut Henri Poincar\'e for their support and hospitality when this work was done. Finally, during the course of writing this paper two people who influenced me quite a bit - Prof. V. Radhakrishnan, whose remarkable approach to  life  changed mine, and Prof. D. Lal, who encouraged me to `prove more lemmas'  - have passed on. I would like to dedicate this work to their memory.

\section{Notation}
\begin{itemize}
\item $A$ -- a principally polarized Abelian surface over a $p$-adic local field $\K_p$. 
\item $\A$ -- a model of $A$ over the ring of integers $\OO_{\K_p}$.
\item $\A_p$ -- the special fibre of $\A$ over the residue field $k$ - usually assumed to be an ordinary Abelian surface.
\item $\LL_0$ -- the line bundle representing the  principal polarization. 
\item $K_A$ -- the Kummer surface associated to an Abelian surface $A$. 
\item $\TK_A$ -- the associated $K3$ surface. 
\item $K\CP_A$ - the associated Kummer Plane.
\item $\phi$ -- the map from $A \longrightarrow K_A$ induced by $\LL_0^2$.
\item $\CC_0$ -- the line bundle corresponding to $\phi(D)$ for $D \in |\LL_0|$
\item $\FF_0$ -- the hyperplane section on $K_A$. $\FF_0=\CC_0^2$. 
\item $\pi$ -- the map from $K_A \longrightarrow K\CP_A$. 
\item $\nu$ -- the blow-up which gives the minimal resolution of singularities  $\TK_A \longrightarrow K_A$
\item $\LD$ -- a line bundle of invariant $\Delta$. 
\item $\QD$ -- the cycle on the Kummer plane corresponding to the line bundle of $\LD$. 
\item $\QD_1$ and $\QD_2$ -- the two components of $\pi^{-1}(\QD)$ in $K_A$. 
\item $\TQ_i$ -- the strict transform of $\QD_i$. 
\item $\TQQ^{\Delta}$ -- the deformation of $\TQ_1+\TQ_2$. 
\item $\TQ_{\eta}$ -- the generic fibre of $\TQQ^{\Delta}$.  
\item $D^{\Delta}_i$ -- the curve $\phi^{-1}(\QD_i)$ in $A$ representing an extra cycle in the N\'{e}ron-Severi of $\A_p$. 

\end{itemize}

\newpage

\section {Abelian surfaces}

\subsection{The Hodge-$\D$-conjecture for Abelian surfaces}

Let $A$ be an   Abelian surface over a $p$-adic field  $\K_p$ with finite residue field.  In this paper we will always assume that $A$ is principally polarized by a line bundle $\LL_0$ - and write $(A,\LL_0)$ when we wish to stress that fact. Let $\A$ be a model over the ring of integers $\OO_{{\K_p}}$ with special fibre $\A_p$. We assume $A$ has good ordinary reduction at $p$, so the special fibre $\A_p$ is smooth. 

One has a map
$$\dots \longrightarrow CH^2(A,1) \otimes \Q \stackrel{\partial}{\longrightarrow} CH^1(\A_p) \otimes \Q \longrightarrow \dots $$
coming from the localization sequence for higher Chow groups. The conjecture above asserts that the map $\partial$ is surjective. 

In order to prove this conjecture one has to first understand the right hand side - namely the Chow group of the special fibre, and then
construct the higher Chow cycles that bound the cycles in the special fibre. In the next section we describe the Chow group of the special fibre.

\subsection{The N\'eron-Severi group of an Abelian surface}

We want to understand the group $CH^1(\A_p)\otimes \Q$. As $CH^1_{hom}(\A_p)\otimes \Q=0$, this is the same as the rational N\'eron-Severi group 
$$CH^1(\A_p)\otimes \Q \simeq NS(\A_p)\otimes \Q.$$
It is well known that the N\'eron-Severi group can be identified as the  part of  the endomorphism algebra, $\End_\Q(\A_p)$, fixed by the Rosati involution $\dagger$ 
$$NS(\A_p)\otimes \Q \simeq \End_{\Q}(\A_p)^{\dagger}.$$
From Tate's theorem  on the description of the endomorphism algebra \cite{tate} one knows that the algebra of a simple Abelian surface  contains a CM field of degree $4$. In particular, the endomorphism algebra  contains a real quadratic field. If the Abelian surface $\A_p$ is not  simple, it contains  the degenerate quadratic `field' $\Q \oplus \Q$. On the real quadratic field as well as $\Q \oplus \Q$, the Rosati involution acts trivially so the rank of the N\'eron-Severi group is always at least two. 

We would like to get an explicit understanding of the generators of this N\'eron-Severi group.  The principal polarization  $\LL_0$ of $A$ is represented by a genus $2$ curve $C$. In fact $A=J(C)$, the Jacobian of $C$. The closure of this  curve in $\A$  restricted to the special fibre $\A_p$ gives one of the generators of $NS(\A_p)\otimes \Q$. However, it is often the case that there are cycles in the special fiber which are not the restrictions of the closure of  cycles in the generic fibre. To get an understanding of these cycles  one has to do a little more work. For this, we have to look at Kummer surface associated to $A$.

\subsection{The Kummer surface and the Kummer plane}

All the statements  in this section are classical and can be found in \cite{biwi}, for example. 

\subsubsection{The Kummer surface}

Let $A$ be an Abelian surface. The {\em Kummer surface} of $A$ is defined to be the hypersurface in $\CP^3$ 
$$K_A=\phi_{\LL_0^2}(A)$$
where $\phi=\phi_{\LL_0^2}$ is the  map 
$$\phi_{\LL_0^2}:A \longrightarrow \CP^3$$
induced by the square of the principal polarization.  Equivalently,  this can be identified with $A/\{\pm 1\}$ -- so the map $A \stackrel{\phi}{\longrightarrow}
K_A$  is a double cover ramified at the sixteen $2$-torsion points of $A$. It is well known, see \cite{biwi}, for example, that the blow up of
$K_A$ at these 16 points is a $K3$ surface $\nu:\TK_A \longrightarrow K_A$.

\subsubsection{ The Kummer Plane}

Let $\pi:\CP^3 \backslash \{0\} \longrightarrow \CP^2$ be the projection with centre $0$, where $0=\phi(0)$. The map $\pi$ restricted to $K_A$
is a double cover of $\CP^2$ ramified at six lines $L_1,\dots, L_6$. These six lines are tangent to a conic. The six lines meet at 15 points
$\{q_{ij}\}$ where $q_{ij}=(L_i \cap L_j)$.  These points  are the images of the non-zero $2$-torsion points under the map $\pi \circ \phi$.
The collection $K\CP_A=(\CP^2, L_1,\dots, L_6)$ is called the {\em associated Kummer plane} of the Abelian surface $A$.


\[
\fbox{ \xy <.6cm,.3cm>: (-6,-2)*{}="A1" ; (2,5)*{}="A2"
**@{-} \POS?(.5)*^++!D{L_1}, (-5,1)*+{}="B1"; (4,-3)*+{}="B2" **@{-}
\POS?(.5)*_+!UR{L_2}; ?!{"A1";"A2"}
*{\bullet}, (-2,5)*+{}="C1";(3,-3)*+{}="C2" **@{-}\POS?(.5)*^+++!DL{L_3}; ?!{"A1";"A2"} *{\bullet};?!{"B1";"B2"}
*{\bullet},
(.5,5)*+{}="D1";(1,-3)*+{}="D2" **@{-} \POS?(.5)*^++!UR{L_4};?!{"A1";"A2"}
*{\bullet};?!{"B1";"B2"} *{\bullet};?!{"C1";"C2"} *{\bullet},
(-5,0)*+{}="E1";(5,0)*+{}="E2" **@{-}\POS?(.5)*^++!U{L_5}; ?!{"A1";"A2"}
*{\bullet};?!{"B1";"B2"} *{\bullet};?!{"C1";"C2"} *{\bullet};?!{"D1";"D2"}
*{\bullet},
(-6,-1)*+{}="F1";(5,1)*+{}="F2" **@{-}\POS?(.45)*^++!D{L_6}; ?!{"A1";"A2"}
*{\bullet}; ?!{"B1";"B2"}*{\bullet}; ?!{"C1";"C2"}
*{\bullet},?!{"D1";"D2"} *{\bullet},?!{"E1";"E2"}
*{\bullet}
\endxy}
\]
\[
\text{ The six lines and fifteen points on $\CP^2$}
\]

The situation is summarized in the diagram below --
\[
\fbox{

\xymatrix{&\TK_A \ar@{->}[d]^{\nu} \\ A \ar[r]^{\phi}_{[2:1]}& K_{A} \ar[r]^{\pi}_{[2:1]}& K\CP_A } }
\]

\subsubsection{ Humbert's Theorem and its generalizations}
\label{humbertsection}

A classical theorem of Humbert \cite{biwi} states that an Abelian surface $A=J(C)$ has real multiplication by $\ZZ(\frac{1+{\sqrt 5}}{2})$ if and only if there is a conic $Q'$ on the Kummer plane $K\CP_{A}$ which passes through 5 of the  15 points  $\{q_{ij}\}$  {\em and} is tangent to one of the other lines.  Further, since $\End(A)\otimes \Q^{\dagger} \simeq NS(A)\otimes \Q$, the N\'{e}ron-Severi group is of rank at least $2$ and there is a curve $D$ on $A$ such that
$$Q'=\pi \circ \phi(D) $$
and $D$ and $C$ generate the part of the rational N\'eron-Severi group coming from $\ZZ(\frac{1+{\sqrt 5}}{2})$. Hence Humbert's theorem can be viewed as providing a geometric characterization of the extra cycle in the N\'{e}ron-Severi group. 

Birkenhake and Wilhelm \cite{biwi} generalize this theorem  and provide a geometric characterization of the cycles in the N\'eron-Severi group of all Abelian surfaces.  In order to describe their theorem we need some definitions.

The {\em Humbert invariant $\Delta(\LL)$ } of a line bundle  $\LL$ is defined to be
$$\Delta(\LL)=(\LL.\LL_0)^2-2\LL^2
\label{humbert}$$
where $(\;.\;)$ is the intersection pairing on $\Pic(A)$.  This is the negative of the self-intersection number on the orthogonal complement of $\LL_0$ hence is a positive definite quadratic form. There is a line bundle of non-zero Humbert invariant if and only if the Picard number is $>1$.  In fact, having a line bundle on $A$  of Humbert invariant $\Delta$  is equivalent to saying that $\OO_{\Delta} \subset \End(A)$, where $\OO_{\Delta}$ is an order of discriminant $\Delta$. 

To state the theorem of Birkenhake and Wilhelm, we need  to distinguish several different cases of $\Delta$ --
\begin{equation}
\Delta =\begin{cases}I. & 8d^2+ 9-2k\\ II. & 8d (d+1)+ 9 -2k \\III. & 8d^2+8-2k\\IV. & 8d(d+1)+12-2k\\V. & d^2 \;\; ( d>1) \end{cases} \label{deltaBW}
\end{equation}
where $d\geq 1$ and $k \in\{4,6,8,10,12\}$.  The theorem of Birkenhake and Wilhelm is the following --

\begin{thm}[Birkenhake-Wilhelm] Let  $(A,\LL_0)$ be a principally polarized Abelian surface with a line bundle $\LL_{\Delta}$ of invariant $\Delta$.
Then there exists a {\bf rational curve} $\QD$  on the Kummer plane $K\CP_A$ which passes through some of the points $q_{ij}$  with no singularities at those points and meets some of the lines $L_j$ with even multiplicity. One has the following cases --  

   \begin{table}[htbp]
      \centering
      \begin{tabular}{ lcll } 
                       $Case$ &  $\Delta$    & $\deg(\QD)$  & No of points $q_{ij}$\\
       \\
         $I$. & $ 8d^2+ 9-2k$ & $2d$ & $k-1$ \\
         $II$. & $  8d (d+1)+ 9 -2k$    & $2d+1$     &  $k$ \\
         $III$. & $  8d^2+8-2k$   & $2d$   & $k$ \\
         $IV$. & $8d(d+1)+12-2k$      & $2d+1$  & $k-1$ \\
         $V$. &  $d^2$  &  $d-1$ & $3$  \\
        
      \end{tabular}

   \end{table}
Further, $\QD=\pi \circ \phi(D^{\Delta})$  where $D^{\Delta}$ is a  curve on $A$ which lies in the linear system of divisors of  a  line bundle
$\LL$  of the form $\LL_0^a \otimes \LL_{\Delta}^b$ with $b \neq 0$. In particular, the class of $D^{\Delta}$ is not a multiple of the class of the principal polarization. 

\label{biwithm}
\end{thm}

\begin{rem} The curves $C$ and $D^{\Delta}$ generate a two dimensional subspace of the  rational  N\'eron-Severi group.  While $D^{\Delta}$ need not be of Humbert invariant $\Delta$ the divisor  corresponding to the line bundle $\LL_{\Delta}$ and $D^{\Delta}$  span the same subspace.  An important aspect of their theorem is  that there is a {\em rational curve} on $K\CP_A$ which represents the extra cycle. Humbert's theorem above is the special case when $\Delta=5$. 
\label{span}
\end{rem}

We now lift this cycle to the Kummer surface. We have the following lemma, which is proved by Jakob \cite{jako} in a special case --

\begin{lem} $\pi^{-1}(\QD)=\QD_1 \cup \QD_2$, where $\QD_1$ and $\QD_2$ are  rational curves on $K_A$.
\end{lem}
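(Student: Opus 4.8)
The plan is to realize $\pi^{-1}(Q')$ as a double cover of $Q'$ and to show that this cover splits by checking that the branch data, restricted to $Q'$, becomes a perfect square. Recall that $\pi:K_A \longrightarrow K\CP_A = \CP^2$ is the double cover branched along the sextic $B = L_1 + \cdots + L_6$. Concretely, if $\ell_j$ is a linear form cutting out $L_j$ and $f = \ell_1\cdots\ell_6 \in H^0(\CP^2,\OO(6))$, then $K_A$ (away from its nodes) is defined in the total space of $\OO(3)$ by $w^2 = f$, with $\pi$ the projection. Restricting to $Q'$, the curve $\pi^{-1}(Q')$ is the double cover of $Q'$ cut out by $w^2 = f|_{Q'}$. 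Since $Q'$ is rational by the Birkenhake--Wilhelm theorem, let $\nu:\CP^1 \longrightarrow Q' \subset \CP^2$ be its normalization; then, up to normalization, $\pi^{-1}(Q')$ is the double cover of $\CP^1$ defined by $w^2 = \nu^* f$.

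The key point is that this cover breaks into two components exactly when $\nu^* f$ is the square of a section of $\nu^*\OO(3)$, equivalently when the divisor $\div(\nu^* f) = \sum_{j=1}^{6}\nu^*(L_j \cdot Q')$ on $\CP^1$ is $2$-divisible. I would establish this from the incidence geometry supplied by the Birkenhake--Wilhelm theorem: at every point where $Q'$ is tangent to some $L_j$ the local intersection multiplicity is even, and at every node $q_{ij} = L_i \cap L_j$ through which $Q'$ passes the two forms $\ell_i$ and $\ell_j$ each vanish, so the combined contribution of $f$ there is again even. The content of the precise version of that theorem is precisely that the entire intersection of $Q'$ with the branch sextic is accounted for by such tangencies and by passages through the $q_{ij}$, with no remaining simple transversal crossings of a single line; hence every coefficient of $\div(\nu^* f)$ is even. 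Writing $\nu^* f = g^2$, the cover splits into the two components $w = g$ and $w = -g$, giving $\pi^{-1}(Q') = Q_1 \cup Q_2$. Each $Q_i$ maps birationally onto the rational curve $Q'$ and is therefore itself rational, which proves the lemma.

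The main obstacle is the bookkeeping in this last step: one must verify that the $6\deg Q'$ intersection points of $Q'$ with the six lines (as counted by B\'ezout) genuinely all occur with even local multiplicity after pullback to $\CP^1$, paying careful attention to points where $Q'$ is singular and to the points where it meets a node $q_{ij}$ (which lies over a node of $K_A$ subsequently blown up by $\rho$). This is where the explicit numerical output of Birkenhake--Wilhelm --- the degree of $Q'$, the precise list of lines it is tangent to, and the precise list of $q_{ij}$ it passes through --- must be invoked, rather than only the qualitative statement; I expect this delicate parity count is also the reason Jakob establishes the result only in a special case.
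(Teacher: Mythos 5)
Your route is genuinely different from the paper's. The paper's entire proof is a citation: Birkenhake--Wilhelm themselves show that $\pi^{-1}(Q')$ contains a component $Q_1$ with $\pi:Q_1\to Q'$ birational, whence $Q_1$ is rational, and the second component $Q_2$ is forced by the degree of $\pi$ (it is $\iota(Q_1)$ for the covering involution $\iota$). You instead try to \emph{prove} the splitting from scratch, by writing $K_A$ as the double cover $w^2=\ell_1\cdots\ell_6$ of $\CP^2$, pulling back to the normalization $\CP^1\to Q'$, and showing the branch divisor becomes $2$-divisible, so the cover of $\CP^1$ is split (there being no connected \'etale double covers of $\CP^1$). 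That criterion is correct and the strategy is sound; it would yield a more self-contained argument than the paper's, and it makes visible exactly which incidence data the splitting depends on. (One small caveat: $2$-divisibility of the divisor gives $\nu^*f=c\,g^2$ with $c$ a constant, so over the non-closed residue/base fields in play the splitting is a priori only geometric or after a quadratic extension --- harmless for the lemma as stated, but worth a sentence.)

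The genuine gap is the parity count itself, which you name but do not carry out, and the one piece of reasoning you do offer for it is a non sequitur: at a point $q_{ij}=L_i\cap L_j$ on $Q'$ you argue that since both $\ell_i$ and $\ell_j$ vanish there, ``the combined contribution of $f$ is again even.'' What is true is that the contribution is the \emph{sum} of the local intersection multiplicities of $Q'$ with $L_i$ and with $L_j$, which is at least $2$ but is even only if the two multiplicities have equal parity --- for instance, tangent to $L_i$ and transverse to $L_j$ at $q_{ij}$ gives $2+1=3$. Ruling this out, and verifying that every branch of $Q'$ at each of its singular points meets each $L_j$ with even multiplicity away from the $q_{ij}$, requires precisely the case-by-case numerical data of Theorems 7.1--7.4 and Lemma 6.1 of Birkenhake--Wilhelm (the paper itself invokes Lemma 6.1 later, in the lemma on $Q_1\cap Q_2$, to assert the even-multiplicity statement at non-$2$-torsion points). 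Since the whole content of the splitting is concentrated in this verification, and since the alternative is to quote the same source for the stronger, already-proved statement that a birational component exists, your argument as written is an outline of a harder proof rather than a complete one. To finish it you would need to extract from those theorems the exact tangency and incidence pattern of $Q'$ in each of the four cases of $\Delta$ and check the parity at every point of $Q'\cap(L_1+\cdots+L_6)$, including the singular points of $Q'$.
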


\begin{proof} Birkenhake-Wilhelm \cite{biwi} show that there is a curve $\QD_1$ such that the map $\pi:\QD_1 \longrightarrow \QD$ is birational.
Hence the curve $\QD_1$ is rational. Since the map $\pi$ is a double cover, $\pi^{-1}(\QD)=\QD_1 \cup \QD_2$ where $\QD_2$ is another rational
curve.
\end{proof}

\subsubsection{The involution $\iota$.} 
\label{involutionspan} $\pi$ is a double cover so it induces an involution $\iota$  on $K_A$. Under this involution one has
$$\iota(\QD_1)=\QD_2.$$
Since $\pi$ is ramified over the lines $L_i$, $\pi^{-1}(L_i)$ is fixed under $\iota$ and we will abuse notation to denote the line $\pi^{-1}(L_i)$ in $K_A$ by $L_i$ as well. This involution also acts on the N\'{e}ron-Severi group of the Abelian surface. We know that $\Q(\sqrt{\Delta})$ can be identified with a subgroup of $NS(A)\otimes \Q$ and with respect to this  identification, $\iota$ can be though of as the non-trivial Galois conjugation.

The rational N\'eron-Severi group $NS (\CP^2)\otimes \Q \simeq \Q$. So the pull-back of a cycle in $K\CP_A$ lies in  subspace  $\Q\FF_0$. The involution $\iota$ acts trivially on the classes of such cycles and the class of a  cycle in $K_A$ lies in the $\Q$-span of $\FF_0$ if and only if its class  is fixed by $\iota$. 

The work of Birkenhake and Wilhelm is in the complex situation but their work is purely algebraic and carries through, {\em mutatis mutandis}, to the case of Abelian surfaces over finite fields as long as the characteristic is not $2$.  

We apply this to the case when $A=\A_p$, an Abelian surface over a finite field. Hence, for a line bundle  $\LD$  in $\A_p$ with Humbert invariant $\Delta >0$ there is a rational curve $\QD$ in $K\CP_{\A_p}$ associated to this line bundle. This cycle pulls back to $\QD_1 \cup \QD_2$ on $K_{\A_p}$ and one of the components pulls back to a cycle $D_p^{\Delta}$ in $\A_p$ whose cycle class is not a multiple of the class of the principal polarization. 

\section{Rational curves on $K3$ surfaces}

Our next step is to deform the rational curves $\QD_1 \cup \QD_2$  above to the generic fibre. For this, we use some recent work of Bogomolov, Hassett and Tschinkel  on the existence of rational curves on $K3$ surfaces. A conjecture, attributed to Mumford, states that there are infinitely many rational curves on an arbitrary $K3$ surface. The existence of even a single rational curve on a  $K3$ surfaces is not always trivial. 
 
In a recent paper, Bogomolov, Hassett and Tschinkel \cite{BHT} proved a mixed characteristic generalization of a result of Mori and Mukai
\cite{momu} which they use to construct infinitely many rational curves on certain $K3$ surfaces of Picard number 1. Their  idea is to deform rational curves from a special fibre to the generic fibre.  We would like to do something similar and  for our purposes we need to use  a slight modification Theorem 18 of \cite{BHT}. The proof of this was explained to us by N. Fakhruddin\cite{fakh}.

\begin{thm} 

  Let $(S_0,f_0)$ be a $K3$ surface over a finite field $k$ of
  characteristic $p$ together with a divisor class $f_0$ on
  $S_0$. Suppose
$$C=C_1+\dots+C_r$$
is a connected union of rational curves $C_i \subset S_0$ such that
$[C]=Nf_0$ for some positive integer $N$. Assume that the $C_i$ are
distinct and that $S_0$ is not uniruled.  Let $(S,f)$ be a
(projective) $K3$ surface together with a divisor class defined over a
finite extension $W'$ of the ring of Witt vectors $W(k)$ reducing to
the base change of $(S_0,f_0)$ to the residue field of $W'$.  Then
there is a relative curve $R \subset S\times_{Spec(W')}Spec(W'')$,
where $W''$ is a finite extension of $W'$ such that $R$ reduces to
(the base change of) $C$ and all irreducible components of the generic
fibre of $R$ are rational. If there are only two curves $C_1$ and $C_2$, the rational curve $R$ is irreducible.

 \label{BHTthm}
\end{thm}

\begin{proof}
  The proof is essentially the same as that of \cite{BHT}, Theorem 18 -
  there the authors assume that $f_0$ is an ample class so here we
  only explain, using their notation,  why that assumption is
  unnecessary.

  As in \cite{BHT} the dimensions of the formal scheme
  $\overline{\mathcal{M}}_0^{\circ}(\mathcal{S}/B, Nf_0)$ and its
  image in $B$ is at least $20$; the ampleness assumption is not used
  anywhere for this computation.  Moreover, Theorem 16 of
  \cite{BHT} does not have the ampleness hypothesis, so as there we also
  get that the formal scheme $\Sigma_{Nf_0}$ has dimension $20$ and is
  not contained in the fibre over the closed point of $Spf(W(k))$.
  
  By construction, the image of
  $\overline{\mathcal{M}}_0^{\circ}(\mathcal{S}/B, Nf_0)$ in $B$ is
  contained in $\Sigma_{Nf_0}$, so since they have the same dimension
  and $\Sigma_{Nf_0}$ is smooth (by Proposition 16 of \cite{BHT}), they
  must be equal.

  By assumption, since $f$ reduces to $f_0$ it follows that the
  morphism $Spec(W') \to B$ corresponding to the relative surface $S$
  factors through $\Sigma_{Nf_0}$, hence also though the image of
  $\overline{\mathcal{M}}_0^{\circ}(\mathcal{S}/B, Nf_0)$. Since we
  have assumed that $S$ is projective over $W'$, it follows from
  Grothendieck's algebraisation theorem as in \cite{BHT}  that the
  restriction of $\overline{\mathcal{M}}_0^{\circ}(\mathcal{S}/B,
  Nf_0)$ to $Spec(W')$ is an \emph{algebraic} stack. Moreover, this
  stack maps onto $Spec(W')$ so the generic fibre is
  non-empty. It follows that there exists a morphism
$$ Spec(W'') \to \overline{\mathcal{M}}_0^{\circ}(\mathcal{S}/B, Nf_0)
\times_B W' \ , $$
with $W''$ a finite extension of $W'$ such that the image of the
closed point of $Spec(W'')$ corresponds to the curve $C$. Pulling back
the universal family over
$\overline{\mathcal{M}}_0^{\circ}(\mathcal{S}/B, Nf_0)$ gives us a
stable map to $S$ (defined over $W''$) and the image of this gives us
the desired relative curve $R$.

In the special case when there are only two curves $C_1$ and $C_2$
which do not deform to the generic fibre Lemma 19 of \cite{BHT} shows that $R$ is irreducible. 
\end{proof}


\section{ Elements of the Higher Chow Group}

Let $X$ be a surface over a global or local field  $\K$. The group  $CH^2(X,1)$ has the following presentation \cite{rama}. It is generated by
formal sums of the type
$$\sum_i (C_i,f_i)$$
where $C_i$ are curves on $X$ and $f_i$ are $\bar{\K}$-valued functions on the $C_i$ satisfying the co-cycle condition
$$\sum_i \div{f_i}=0.$$
Relations in this group are give by the tame symbol of pairs of functions on $X$.

There are some  elements of this group coming from the product structure
$$\bigoplus_{\BL/\K} CH^1(X_{\BL}) \otimes CH^1(X_{\BL},1) \longrightarrow \bigoplus_{\BL/\K}  CH^2(X_{\BL},1) \stackrel{ \oplus N_{\BL/\K}}{\longrightarrow} CH^2(X,1)$$
where $\BL$ runs through all finite extensions of $\K$ and $N_{\BL/\K}$ is the norm map.  The subgroup  of {\em decomposable elements} $CH^2_{dec}(X,1)$ is  the image of this map.  A theorem of Bloch \cite{bloc} [Theorem 6.1], says that $CH^1(X_{\BL},1)$ is simply $\BL^{*}$ where $\BL$ is the field of definition of $X_{\BL}$ -- so such an element looks like a sum of elements of the type $ (C, a ) $,  where $C$ is a curve on $X_{\BL}$ and $a$ is in $\BL^*$. The group of {\em indecomposable} elements of $CH^2(X,1)$ is the quotient group 
$$CH^2_{ind}(X,1) \simeq CH^2(X,1)/(CH^2_{dec}(X,1).$$ 
In general it is not so easy to show that this  group is non-trivial  and in some instances, for example for $X=\CP^2$, it is trivial. 

\begin{rem}The group $CH^2(X,1)\otimes \Q$ is the same as the ${\mathcal K}$-cohomology group $H^1_{Zar}(X,{\mathcal K}_{2})\otimes \Q$ and the motivic cohomology group
$H^3_{\M}(X,\Q(2))$ - see, for example, \cite{stac}.
\end{rem}

\subsection{The boundary map}

Let $X$ be as above and $\XX$ a  model of $X$  over the ring of integers with special fibre  $\XX_p$ at a prime $p$. We assume $\XX_p$ is smooth - that is, $X$ has good reduction at $p$. The boundary map 
$$\partial:CH^2(X,1) \longrightarrow CH^1(\XX_p)$$
is defined as follows 
$$\partial \left( \sum_i (C_i,f_i) \right)=\sum_i \div_{\bar{C_i}}(\bar{f_i})$$
where $\bar{C_i}$ denotes the closure of $C_i$ in the semi-stable model $\XX$ of $X$. From the co-cycle condition, the horizontal divisor,
namely, the closure  $\sum_i \overline{\div_{C_i}(f_i)}$ of $\sum_i \div_{C_i}(f_i)$ ,  is $0$, so the boundary is supported on the special fibre.

For a decomposable element of the form $(D,a)$ the boundary map is particularly simple to compute 
$$\partial((D,a))=\ord_p(a) {\mathcal D}_p.$$
where ${\mathcal D}_p$ is the special fibre of the closure of $D$ in $\XX$. 
\begin{rem}In particular, a cycle in the special fibre which is not the restriction of a cycle in the generic fibre cannot appear in the  boundary of a decomposable element. \label{indres} \end{rem}

\section{A new element in the higher Chow group of an Abelian Surface}

Let $(A,\LL_0)$ be an Abelian surface over a p-adic local field $\K_p$  as before.  We assume  that $A$ has good, non-supersingular reduction at $p$. Then we have 
$$\pic (\A_p)\geq \pic(A)$$
where $\pic$ is the Picard number - the rank of the N\'{e}ron-Severi group.  If $\pic (\A_p)=\pic (A)$, then every cycle in $NS(\A_p)$ is the restriction of the closure of a cycle in $NS(A)$,  so the surjectivity of the boundary map is trivial -- as all the cycles can be obtained as boundaries of decomposable elements. Hence we assume $\pic(\A_p) >\pic(A)$.  In other words, we assume that there are always  cycles in the N\'{e}ron-Severi group of the special fibre which are not the restriction to the special fibre of the closure  of cycles on the generic fibre. 

In this section for such a cycle $\D_p$  we will construct an an element $\Xi_{A,\D_p}$ of $CH^2(A,1)$ such that the image $\partial(\Xi_{A,\D_p})$ 
is a non-zero multiple of the cycle $\D_p$ in the N\'{e}ron-Severi group of $\A_p$. In particular, from Remark \ref{indres}, it is an {\em indecomposable} element in the higher Chow group. 

The idea is the following:  One way to construct a higher Chow cycle on a surface $S$  is to use a rational curve $C$ with a node $P$ lying on the surface. The normalisation  $\eta:\TC \to C$ will have two points $P_1$ and $P_2$  lying over the node. Since $\TC$ is rational one can find a function $f$ on $\TC$ with divisor $P_1-P_2$. Then the element $\eta_*(\TC,f)$ is an element of $CH^2(S,1)$. 

Unfortunately, Abelian surfaces do not have rational curves. So we work with the associated Kummer $K3$ surface. The work of Birkenhake and Wilhelm \cite{biwi} shows  that given a cycle in the special fibre  of non-zero Humbert invariant  there is a  rational curve in the special fibre of the $K3$ surface which represents it.  We show that this curve along with a conjugate  rational curve satisfies the condition of  Theorem \ref{BHTthm} - so we can deform them to an irreducible rational curve on the generic fibre. Further, we show that this curve has a node. Using that curve we construct a  higher Chow cycle on the generic Kummer surface, transfer it to the Abelian surface and show that it has the required properties. 

Since we work with related objects on the Abelian surface and its associated Kummer surface, $K3$ surface and Kummer plane, there is a lot of notation which can get a little confusing so the reader should keep the above remarks in mind.

\subsection{ Lifting to the model }

Let $\A$ be a regular model of $A$ with special fibre $\A_p$. Let $K_{\A}$ be the Kummer Surface of $\A$. Let $\FF_0$ denote the hyperplane section on $K_{\A}$ -- so $\phi^{*}(\FF_0)=\LL_0^2$. Let $(\TK_{\A},\TFF_0)$ denote the $K3$ surface obtained by blowing up the sixteen nodal points on $(K_{\A}, \FF_0)$ and
$$\nu:\TK_{\A} \longrightarrow K_{\A}$$
denote the birational map from $\TK_{\A}$ to $K_{\A}$. Let $\nu^*(\FF)$ denote the total transform of a line bundle $\FF$ on $K_{\A}$ and $\TFF_0$
denote the strict transform of $\FF_0$. Let $\TCC_0$ denote the strict transform of $\CC_0$ - where $\CC_0$ is  a cycle whose class is the generator of $\Q\FF_0 \cap  NS(K_{\A})$. Its special fibre $\TCC_{0,p}$ is the generator of $\Q\FF_{0,p} \cap NS(\TK_{\A_p})$ since the co-kernel of the specialisation map has no torsion.

Since the  cycle $\D_p$  in the N\'{e}ron-Severi of $\A_p$  is not in the span of the polarisation, it has a non-zero Humbert invariant $\Delta$. Hence from  Theorem \ref{biwithm} there is a rational curve $\QD$ on $K\CP_{\A_p}$ corresponding to this cycle. Let $\QD_1, \QD_2$ and $\TQ_1,\TQ_2$ the rational curves  lying above this curve in $K_{\A_p}$ and $\TK_{\A_p}$ respectively. $\TQ_i$ is the strict transform of $\QD_i$ under the birational map $\nu$. Let $D^{\Delta}_1$ and $D^{\Delta}_2$ denote the pullbacks of $\QD_1$ and $\QD_2$ to $\A_p$.

\begin{lem} The cycle $\TQ_1+\TQ_2 \in |\TCC_{0,p}^{N}|$ for some $N \in \ZZ$.
\end{lem}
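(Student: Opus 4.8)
The plan is to reduce the statement to a numerical identity in $NS(\TK_A)$ and then exploit the geometry of the double cover $\pi$ and the covering involution $\iota$. First I would record the general principle that on the K3 surface $\TK_A$ the group $\Pic(\TK_A)=NS(\TK_A)$ is torsion-free, $\Pic^0=0$, and the intersection form is nondegenerate (all characteristic-free, hence valid over the finite residue field and its algebraic closure); consequently numerical, algebraic and linear equivalence of divisors all coincide. Since $\TQ_1+\TQ_2$ is effective, it therefore suffices to prove the numerical identity $[\TQ_1+\TQ_2]=N[\TCC_0]$ in $NS(\TK_A)$ for some positive integer $N$: membership in $|\TCC_0^{N}|$ and positivity of $N$ (obtained by intersecting with an ample class) then follow automatically, and $N$ is forced to be an integer because $\TCC_0$, being a smooth rational $(-2)$-curve, has primitive class.

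Next I would locate the class on the singular Kummer surface $K_A$. Because $Q_1+Q_2=\pi^{-1}(Q')$ is pulled back from the Kummer plane and $NS(\CP^2)\simeq\ZZ$, the class $[Q_1+Q_2]$ equals $\deg(Q')$ times the pullback of a line; in particular it is fixed by the involution $\iota$ and hence lies in the principal part of $(K_A,\FF_0)$. By the description of that principal part as the saturation of $\ZZ[\CC_0]$, I obtain $[Q_1+Q_2]=M[\CC_0]$ in $NS(K_A)\otimes\Q$ for some $M\in\Q$. Equivalently, pulling back by $\phi$ to $A$, the symmetric class $[D_1+D_2]$ is the trace of $[D_1]$ under the Galois conjugation $\iota$ of $NS(A)\otimes\Q\simeq\Q(\sqrt{\Delta})$, hence a rational multiple of $[\LL_0]$.

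Finally I would transfer this proportionality from $K_A$ to $\TK_A$ by comparing strict transforms under $\rho$. Writing out the $A_1$-resolution relations --- for instance $\rho^{*}\CC_0=\TCC_0+\frac12\sum_{i\in S}E_i$ over the six nodes $S$ lying on the trope, together with the analogous expansion of $\rho^{*}(Q_1+Q_2)$ --- yields $[\TQ_1+\TQ_2]=M[\TCC_0]+\sum_i c_i E_i$ in $NS(\TK_A)\otimes\Q$. The main obstacle is showing that every exceptional coefficient $c_i$ vanishes; equivalently that $(\TQ_1+\TQ_2)\cdot E_i=M\,(\TCC_0\cdot E_i)$ for each of the sixteen $(-2)$-curves $E_i$, i.e. that $Q_1$ and $Q_2$ meet exactly the two-torsion nodes carried by the trope $\CC_0$, with matching multiplicities, and avoid the others. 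I expect to establish this by combining the symmetry of the lift --- $\tilde\iota$ interchanges $\TQ_1$ and $\TQ_2$, fixes $[\TCC_0]$ and each $E_i$, so both sides are $\tilde\iota$-invariant --- with the explicit incidence of $Q'$ with the points $q_{ij}$ and its tangencies to the lines $L_j$ supplied by the precise form of the Birkenhake--Wilhelm theorem. Once the $c_i$ are shown to vanish we have $[\TQ_1+\TQ_2]=M[\TCC_0]$ with $M=N\in\ZZ_{>0}$, which by the first paragraph gives $\TQ_1+\TQ_2\in|\TCC_0^{N}|$.
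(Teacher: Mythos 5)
Your first two paragraphs follow essentially the same route as the paper: the paper's proof is precisely the observation that $\Pic(\CP^2)\simeq\ZZ$ forces $\pi^*(Q')=Q_1+Q_2$ into the principal part $\ZZ[\CC_0]$ of $NS(K_A)$, after which it passes to strict transforms. Your added reductions (torsion-freeness of $\Pic$ of a K3, coincidence of numerical and linear equivalence, effectivity giving membership in the linear system, primitivity of the $(-2)$-class $[\TCC_0]$ giving integrality of $N$) are all correct and make the descent from a class identity to a statement about linear systems cleaner than the paper's.

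The problem is your third paragraph. You correctly identify that the relation $[Q_1+Q_2]=N[\CC_0]$ on $K_A$ does not automatically descend to $[\TQ_1+\TQ_2]=N[\TCC_0]$ on $\TK_A$: one must check that the exceptional coefficients in $\rho^*(Q_1+Q_2)-N\rho^*(\CC_0)$ cancel, i.e.\ that the incidences of $Q_1\cup Q_2$ with the sixteen nodes match $N$ times those of the trope $\CC_0$ (which passes through exactly six of them). But you then write only that you ``expect to establish this'' from the $\tilde\iota$-symmetry together with the incidence data of Birkenhake--Wilhelm; that is a plan, not an argument, and it is exactly where the content of the lemma sits. Note that the data recorded later in the paper (the curve $Q'$ meets $k$ or $k-1$ of the fifteen points $q_{ij}$ with $k\in\{4,6,8,10,12\}$, and meets the lines $L_i$ elsewhere with even multiplicity) makes this matching a genuinely nontrivial bookkeeping exercise, not a formality. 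To be fair to you, the paper's own proof elides the same step --- it simply asserts that the strict transform of an element of $\ZZ\CC_0$ lies in $\ZZ\TCC_0$ --- so you have put your finger on the one point that actually needs verification; but as written your proposal does not close it, and until it is closed the proof is incomplete.
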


\begin{proof} Since $\Pic(\CP^2) \simeq \ZZ$,  the class of $\QD$ is a multiple of the hyperplane section. Its pull back  $\pi^*(\QD)$  is fixed by the involution $\iota$ and so its class  lies in the part of $K_{\A_p}$ spanned by the hyperplane section -- which is $\ZZ [\CC_{0,p}]$ by the remarks in Section \ref{involutionspan}. Hence the strict transform $\widetilde{\pi^*(\QD)}$ lies in $\ZZ [\TCC_{0,p}]$. However, $\pi^{-1}(\QD))=\QD_1 \cup \QD_2$. Hence its  strict transform is $\TQ_1 \cup \TQ_2$ and one has
$$\TQ_1+\TQ_2 \in | \TCC_{0,p}^N|$$
for some $N \in  \ZZ$.

\end{proof}

We would like to apply Theorem \ref{BHTthm}  to this situation. For that we need the curve $\TQ_1+\TQ_2$ to be connected. 
To verify that this is the case, we analyze the points of intersection of $\QD_1$ and $\QD_2$ under the blow-up . The curves $\QD_1$ and $\QD_2$ intersect at several points on $K_{\A_p}$ -- in fact, one has 

\begin{lem} If $x \in L_i \cap \QD_1$ for some $i$ then $x \in  \QD_1 \cap \QD_2$ . Further --
\begin{itemize}
\item  $x$ lies in the image of the two torsion if and only if $x \in L_i \cap L_j \cap \QD_1 ( \cap \QD_2)$ for some $i$ and $j$. 
\item  The other points of  $L_i \cap \QD_j$ which are not the images of two torsion points appear with even multiplicity. 
\end{itemize}
\label{intersection}
\end{lem}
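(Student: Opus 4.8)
The plan is to work directly with the double-cover structure $\pi\colon K_{\A_p}\to \CP^2$. Since $\pi$ is branched along the sextic $L_1\cup\dots\cup L_6$, it is given locally by an equation $w^2=f$ with $f=L_1\cdots L_6$, and the involution $\iota$ is $w\mapsto -w$; its fixed locus is exactly the ramification curve $\pi^{-1}(\bigcup_i L_i)=\bigcup_i L_i$ (in the abuse of notation of the previous subsection). Because $Q_2=\iota(Q_1)$, a point $x$ lies in $Q_1\cap Q_2$ if and only if $x\in Q_1$ and $\iota(x)\in Q_1$. The inclusion $\supseteq$ of the first assertion is then immediate: if $x\in L_i\cap Q_1$ then $x$ is a fixed point of $\iota$, so $\iota(x)=x\in Q_1$ and hence $x\in Q_1\cap Q_2$.

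For the reverse inclusion I would take $x\in Q_1\cap Q_2$ and set $y=\pi(x)$. If $y\notin\bigcup_i L_i$ then $x\neq\iota(x)$, and both $x$ and $\iota(x)$ are then distinct points of $Q_1$ mapping to the single point $y$; since the previous lemma gives that $\pi\colon Q_1\to Q'$ is birational, this forces $y$ to be a point of non-injectivity of the normalisation of $Q'$, i.e. a singular point of $Q'$. The key geometric input is that all singular points of $Q'$ lie on the branch locus -- they occur at the points $q_{ij}$ through which the rational curve $Q'$ is forced to pass by the precise form of the Birkenhake--Wilhelm theorem. At such a point $f(y)=0$, so $x$ lies on the ramification curve and is fixed by $\iota$, contradicting $y\notin\bigcup_i L_i$. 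Hence $y\in\bigcup_i L_i$, that is $x\in L_i\cap Q_1$, which proves $Q_1\cap Q_2=\bigcup_i(L_i\cap Q_1)$. I expect the main obstacle to be justifying that $Q'$ can only acquire singularities at the $q_{ij}$; this is where I would invoke the detailed degree and tangency data of \cite{biwi} rather than the qualitative statement quoted earlier.

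The second bullet is then a translation between the two levels. The images of the sixteen $2$-torsion points of $\A_p$ are precisely the sixteen nodes of $K_{\A_p}$, and the fifteen nonzero ones map under $\pi$ to the fifteen points $q_{ij}=L_i\cap L_j$. A point $x\in Q_1\cap Q_2$ is therefore a $2$-torsion image if and only if $\pi(x)$ lies on two of the six lines, that is $x\in L_i\cap L_j\cap Q_1(\cap Q_2)$ for some $i\neq j$; under $\pi$ these are exactly the nodes of $K_{\A_p}$ lying over the $q_{ij}$.

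For the third bullet I would use that the splitting $\pi^{-1}(Q')=Q_1\cup Q_2$ is equivalent to $f$ becoming a square on the normalisation $n\colon\CP^1\to Q'$: writing $Q_1$ as the image of $t\mapsto(n(t),w_1(t))$ one has $n^*f=w_1^2$, so $n^*f$ must vanish to even order at every point of $\CP^1$. At a point $y\in Q'\cap L_i$ that is not one of the $q_{ij}$, and hence not a $2$-torsion image, only the single factor $L_i$ of $f$ vanishes, so this even order equals the local intersection multiplicity of $Q'$ and $L_i$ at $y$; thus that multiplicity is even, which is the assertion. At a $q_{ij}$ two factors vanish and only the sum of the two multiplicities need be even, which is precisely why those points behave differently and are singled out in the second bullet. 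The only bookkeeping to watch here is the factor of two between intersection numbers computed on $\CP^2$ and those computed upstairs on $K_{\A_p}$, where $\pi$ is ramified; I would record the local model $w^2=f$ once and read off both the parity statement and the downstairs--upstairs comparison directly from it.
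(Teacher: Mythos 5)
Your treatment of the first assertion follows the same route as the paper: the forward inclusion via $\iota$ fixing the ramification curves, and the reverse inclusion via the birationality of $\pi\colon Q_1\to Q'$ (the paper simply says that $x\neq\iota(x)$ both lying on $Q_1$ ``contradicts the fact that $\pi\colon Q_1\to Q'$ is of degree $1$''). For the second and third bullets the paper gives no argument at all --- it cites Lemma 6.1 of Birkenhake--Wilhelm --- whereas you supply actual proofs: the identification of the $2$-torsion images with the points lying over the $q_{ij}$, and the parity of the intersection multiplicities via the observation that the splitting of the cover forces $n^*f$ to be the square $w_1^2$ on the normalization of $Q'$, hence to vanish to even order everywhere. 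That last argument is clean and self-contained, and it proves exactly the statement the paper later uses (evenness of the multiplicities of $L_i\cap Q'$ in the plane); just note that the multiplicity computed upstairs on $K_{\A_p}$ is $\ord_t w_1=\tfrac12\ord_t n^*L_i$, half the downstairs number, so the parity claim should be attached to the plane intersection.

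The genuine gap is in your patch of the reverse inclusion. You correctly reduce to excluding a point $y=\pi(x)\notin\bigcup_i L_i$ with $x$ and $\iota(x)$ distinct points of $Q_1$, and you observe that such a $y$ must be a point of non-injectivity of $Q_1\to Q'$, hence a singular point of $Q'$. But the claim you then invoke --- that all singularities of $Q'$ occur at the $q_{ij}$ --- is false in general: $Q'$ is a rational plane curve of degree $2d$ or $2d+1$, so its $\delta$-invariant grows like $2d^2$, which for large $d$ far exceeds what the fifteen points $q_{ij}$ (through which $Q'$ passes with bounded multiplicity) can absorb; $Q'$ necessarily has many nodes away from the branch sextic. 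What is actually needed is weaker and different: at a multibranch singularity of $Q'$ off $\bigcup_i L_i$, the fibre $\pi^{-1}(y)=\{x,\iota(x)\}$ carries all the branches, and one must show that every branch through $x$ belongs to the \emph{same} component (so $Q_1$ acquires a node at $x$ and $Q_2$ one at $\iota(x)$), rather than the branches being distributed between $Q_1$ and $Q_2$. In your local model this is the statement that $w_1$ takes a single value among $\pm w_0$ on all branches of $Q'$ at $y$; it does not follow from birationality alone and requires the explicit description of $Q_1$ in Birkenhake--Wilhelm. (To be fair, the paper's own one-line ``degree $1$'' argument has the same lacuna, since a birational morphism onto a singular curve need not be injective; but the specific repair you propose would not close it.)
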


\begin{proof}  If $x$ lies on $L_i \cap \QD_1$, then, as $\iota$ fixes $L_i$, $\iota(x)=x$. Hence $x$ lies on $\iota(\QD_1)=\QD_2$.  For the second and third statements we refer to \cite{biwi}, Lemma 6.1.


\end{proof}

From \cite{biwi} Thms 7.1-7.5  we know there are $k$, $k-1$ or $3$ points on $\QD$ which are the images of the  $2$-torsion points of $\A_p$. These points lie on $L_i \cap L_j \cap \QD$, hence their pre-images under $\pi$ lie on $\QD_1 \cap \QD_2$.   Under the blow-up to $\TK_{\A_p}$  they  need not be points of intersection of $\TQ_1 \cap \TQ_2$ any longer. The remaining points on $L_i \cap \QD$ are points of even multiplicity. Hence we need to show that either there is a point of intersection which does not lie in the image of  the two-torsion points or that a point which lies in the image of the  two-torsion points continues to lie on the intersection after the blow-up.  We abuse notation and call a point of intersection of $\QD_1$ and $\QD_2$ on  $K_{\A_p}$, or its image in  $K\CP^2_{\A_p}$,  a {\em two-torsion point} if it lies in the image of the two-torsion points of $\A_p$. Similarly, we call such a point a  {\em non two-torsion point} if it does not lie in the image of the two-torsion points of $\A_p$.  


\begin{lem} 

If $d>2$ then $\TQ_1\cup \TQ_2$ has at least two points of intersection and, in particular, is connected. 
\end{lem}

\begin{proof} From \cite{biwi}, we know that the degree of $\QD$ is either $2d$, in cases $I$ and $III$, or $2d+1$, in cases $II$ and $IV$ of the table  \eqref{deltaBW} in Section \ref{humbertsection}. Further, there are at most five two-torsion points of  on a line $L_i$ - as these correspond to points of intersection with the other $5$ lines $L_j$.   

\begin{itemize}
\item  {\em Case 1 -- The two-torsion points on $\QD \cap L_i$ appear with multiplicity $1$:} Since the non two-torsion points on $L_i$ appear with even multiplicity, there are at least $2d-4$ non two-torsion points as, if the degree is $2d$ there can be at most four two-torsion points and if the degree is $2d+1$ there can be five. 
Hence if $2d-4>1$, there will be at least $1$ such point on every line $L_i$. Such points do not get separated in the strict transform to $\TQ_1 \cup \TQ_2$ hence this curve has at least two points of intersection - in fact at least six. 
\item {\em Case 2 -- The two-torsion points have  multiplicity $>1$:} If at least  two two-torsion points on $(\bigcup_i L_i) \cap \QD$ have high multiplicity, then, under the blow up to $\TQ_1 \cup \TQ_2$  they will remain singular and points of intersection of $\TQ_1 \cup \TQ_2$ -- see Case 2 of Lemma \ref{sing} for details. If there is only one, then Case $1$ above  shows there is at least one non two-torsion point on $\TQ_1 \cup \TQ_2$. 
\end{itemize}

Since there are at least two points of intersection, the curve $\TQ_1 \cup \TQ_2$ is connected. 

\end{proof}

\begin{rem} The restriction that $d>2$ is not a very serious one as  --  if $\A_p$ contains a line bundle of invariant $\Delta$ then $\A_p$ contains a line bundle of invariant $m^2\Delta$ for any $m \in \ZZ$. For $m^2\Delta$ the corresponding $d$ will be larger. Further, even if $d=1$ or $d=2$ for most values of $k$ there is no problem. 
\end{rem}

Hence  $\TQ_1 \cup \TQ_2$ along with $(\TK_{\A},\TCC_0^N)$ satisfy the hypothesis of Theorem \ref{BHTthm}. Hence  one has that  there exists an irreducible rational curve $\TQQ^{\Delta}$ with $\TQQ^{\Delta} \in |\TCC_0^N|$ defined  on some finite extension of the Witt vectors of $W(k)$ which splits into a sum of these two rational curves mod $p$ --
$$\TQQ^{\Delta}_p=\TQ_1\cup\TQ_2.$$

\subsection{Deformation of Singularities}

A {\em node} is a singularity which is locally isomorphic to the singularity at the origin of the plane curve  $y^2=x^{2k}$.  The number $2k$ is called the {\em order} of the node. A node of order $2$ is called an {\em ordinary node}.   Another way of describing it is as a point where two smooth branches of a curve meet. We will used the word `node' to refer to a node of any order and will use the word `ordinary node' for a node of order $2$. A nodal singularity can be can be resolved by a sequence of blow-ups - each reducing the order by $2$ and hence at the penultimate stage one has an ordinary node and in the normalisation there will be two points lying over the node. 

We would like to use the generic fibre $\TQ_{\eta}$ of the curve $\TQQ^{\Delta}$ to construct a higher Chow cycle as described in the beginning Section 6.  Here we will show that the curve  $\TQ_{\eta}$ has a node.  For this we have to understand the singularities coming from the intersection of $\TQ_1$ and $\TQ_2$ and their deformations. 

\begin{lem}  Let $P$ be a singularity of the curve $\TQ_1 \cup \TQ_2$.  Assume $\nu(P)$ lies on $L_i\cap \QD_1 \cap \QD_2$ for some $i$. Then the singularity at  $P$ is \'{e}tale locally isomorphic to a  singularity  at the origin of a plane curve of the type $y^2=x^{2k}$ for some positive integer $k$ -- that is, it is a higher order node. 
\label{sing}
\end{lem}
\begin{proof}  From Lemma \ref{intersection}, the singularity $\nu(P)$ is either the image of a  two-torsion point  or is a  double intersection of a line $L_i$ with $\QD_j$. Let $P'=\pi(\nu(P))$, the image of $P$ in $K\CP^2_{\A_p}$ which lies on the curve $\QD$.  Since $\QD$ is a rational curve,  after going to a completion we may assume that it is locally isomorphic to  $y=F(x)$ near $P'$, where $F(x)$ is a power series. 
\begin{itemize}

\item{\em Case 1:} If $P'$ is not the image of a two torsion point, then from  Lemma \ref{intersection}, the multiplicity of $L_i \cap \QD$ is {\em even}.  Hence near $P'$, $\QD$ is locally isomorphic to $y=x^{r}G(x)$ with $r=2k$ even and $G(0) \neq 0$ and  $L_i$ is locally isomorphic to  $y=0$. Replacing $y$ by $y/G(x)$ we may assume that it is locally isomorphic to $$y=x^{r}.$$

The map $K_{\A_p} \stackrel{\pi}{\longrightarrow} K\CP^2_{\A_p}$ in a neighborhood of $\nu(P)$ is a double cover ramified at the line $L_i$ so it is locally isomorphic to $z^2=y$ near  $\nu(P)$, with the double cover given by $(x,y,z) \longrightarrow (x,y)$.

The pre-image of the curve $\QD$ has two components $\QD_1$ and $\QD_2$ which are birationally isomorphic to $\QD$ and  are conjugate via the map $(x,y,z) \rightarrow (x,y,-z)$. So locally, near the ramified point $\nu(P)$ on $K_{\A_p}$, the curve is given by the equations 
\begin{center}
\begin{tabular}{lcr}
$z^2=y$ &and &  $y=x^{2k}$
\end{tabular}
\end{center}
so $z^2=x^{2k}$. Since  $\nu$ is a local isomorphism near $P$ the curve is locally isomorphic to $z^2=x^{2k}$ near $P$ as well. Hence it is a node of order $2k$.

\item{\em Case 2:} If $P'$ is the image of a two torsion point, then, as before, on $K\CP^2_{\A_p}$  the curve is locally isomorphic to 
$$y=x^{r}$$
for some natural number $r$ near $P'$. Here the two lines $L_1$ and $L_2$ are given by the axial lines $x=0$ and $y=0$. The double cover is ramified at both the lines $x=0$ and $y=0$ so is locally isomorphic to the surface $z^2=xy$ with the map given by  $(x,y,z) \longrightarrow (x,y)$.

However, unlike the earlier case, the point $\nu(P)$ is singular. The map $\nu:\TK_{\A_p} \longrightarrow K_{\A_p}$  is a blow up  in a neighborhood of $\nu(P)$. Hence we have to see what happens to the singularity under this blow up. 

Let $[u,v,w]$ be the co-ordinates on the exceptional fibre $\CP^2$ of the blow up of $\AF^3$ at the origin. The  blow up of $z^2-xy$ at $(0,0,0)$ is given by the equations
\begin{center}
\begin{tabular}{lr}
$xv=yu$ & $xw=zu$ \\
$yw=zv$ & $z^2=xy $
\end{tabular}
\end{center}

The blow up of $\AF^3$ at the origin is covered by three affine open sets $\AF^3 \times \AF^2_u$, $\AF^3 \times \AF^2_v$ and $\AF^3 \times \AF^2_w$, where $\AF^2_*$ is the plane given by $*=1$ in the exceptional fibre $\CP^2$. To understand the strict transform of the curve $y=x^r$ in the blow up of the surface $z^2=xy$,  we first restrict to $\AF^3 \times \AF^2_u$. This gives us the equations 
\begin{center}
\begin{tabular}{lr}
$y=xv$ & $z=xw$ \\
$z^2=xy $ & $y=x^{r}$
\end{tabular}
\end{center}
 which implies 
$$z^2=x^2w^2=xy=x^{r+1}$$
and so 
$$x^2w^2=x^{r+1}$$
so in a neighborhood of the point $(0,0,0,[1,0,0])$ in the blow up of $\AF^3$  the curve is locally isomorphic to 
$$w^2=x^{r-1}$$
Since we know  the curve $\nu^{*}(\pi^{*}(\QD))=\TQ_1 \cup \TQ_2$ has two components this forces $r-1$ to be {\em  even}. On  $\AF^3 \times \AF^2_v$ and $\AF^3 \times \AF^2_w$ there are no singularities. Since we have assumed that $P$ is a singular point we have $r>1$ and it is a nodal singularity of order $r-1$.

\end{itemize}

From Artin, \cite{arti}, Corollary 2.6, the original singularity and this are \'{e}tale locally isomorphic. 

\end{proof}

A local deformation of a singularity is a family $\XX\slash T$ where $T$ is the spectrum of  a complete local ring such that the special fibre $\XX_0$ is a variety with the  singularity. It is said to be {\em miniversal} if further 
\begin{itemize}
\item (versal) For any other deformation $\XX^{\prime} \slash S$, with $S$ the spectrum of a complete local ring, there is a morphism $\Phi: S \to T$ such that $\XX^{\prime}$ and $\XX \times_T S$ become isomorphic after completing along the closed fibre over zero.
\item (mini) Although $\Phi$  may not be unique, the induced map on Zariski tangent spaces of $S$ and $T$ is uniquely determined.
\end{itemize}
The miniversal deformation of a singularity is \'{e}tale local, so for the purposes of computing this we may assume that the singularity at $P$ is of the form $y^2=x^{2k}$. Using  \cite{hart}, [Theorem 14.1] and Elkik \cite{elki}, [Theoreme 8] one can see that the miniversal deformation space of the singularity over $Spec(W(k))$ is isomorphic to 
$$Spec(W(k)[x,y,a_i,\dots,a_r]/(y^2-F(x)))$$
where 
$$F(x)=\prod_{i=1}^{r} (x-a_i)^{n_i}$$
and  $\sum_{i=1}^{r}  n_i a_i=0$ and $\sum_{i=1}^{r} n_i=2k$. So $F(x)$ is a monic polynomial of degree $2k$ with no degree $2k-1$ term.  This  has singularities at $a_i$ if $n_i>1$.  

For a curve $X=\bigcup_{i=1}^{s_X} X^i$, where $X^i$ are the irreducible components,  the arithmetic genus is 
$$p_a(X)=\sum_i p_g(X^i) -(s_X-1)+ \sum_P \delta_P$$
where $p_g(X^i)$ is the geometric genus (genus of the normalization) and
$$ \delta_P=\sum_Q \frac{m_Q(m_Q-1)}{2}$$
where the sum is over all the infinitely near points $Q$  of $P$ including $P$ (that is, points lying over $P$ under a series of blow-ups.) with multiplicity $m_Q$. Note that $\delta_P \neq 0$ if and only if $P$ is a singular point. We call $\delta_P$ the contribution from the singularity $P$.

\begin{lem} Let $\XX/T$ be a flat family of projective, rational curves over the spectrum of a complete dvr. Suppose 
\begin{itemize}

\item The closed fibre $\XX_0$ has two rational components  and has at least two nodes of some order.  
\item The generic fibre $X$ is irreducible. 

\end{itemize}
Then there is a node $R$ in the special fibre such that 
$$\sum_t \delta_{R^t} = \delta_{R}$$
where $R^t$ are the singular points of $\XX$ specializing to $R$. In particular, this implies that the generic fibre too has a nodal singularity.
\label{nodelemma}
\end{lem}

\begin{proof}

For  a singularity of multiplicity $2$ at the origin of a plane curve  of  the type $y^2=x^r$  one  has 
$$\delta_{(0,0)}=\begin{cases} r/2 & \text{ if $r$ is even }\\ (r-1)/2 & \text{ if $r$ is odd} \end{cases}$$
This is because  the blow up at the origin of $y^2=x^r$ has  a singularity of type $y^2=x^{r-2}$, so the order is lowered by  $2$.  The contribution from this blow up -- and every subsequent blow up -- is $2(2-1)/2=1$. One can repeat  this  process till the point is non-singular -- which happens when the penultimate stage is an ordinary node or a cusp. For an ordinary node $y^2=x^2$ and for a cusp  $y^2=x^3$ one has  $\delta_{0,0}=1$ and from this the result follows. 

In our case, $\XX_0=\XX_0^1 \cup \XX_0^2$, so $s_{\XX_0}=2$. Since the irreducible components are rational,  if the  singularities of $\XX_0$ are at points $P$  one has 
$$p_a(\XX_0)=0-1+\sum_{P \in \XX_0}  \delta_{P}$$
$\XX$ is a flat family so the arithmetic genera of the fibres are constant -- that is,  $p_a(\XX_0)=p_a(X)$. From this along with  $p_g(\XX_0)=p_g(X)=0$  and the fact that the generic fibre is irreducible, so $s_X=1$, one has 
\begin{equation}
-1+\sum_{P \in \XX_0} \delta_P=\sum_{R \in X} \delta_{R}
\label{genspec}
\end{equation}
Let $\{P^j\}_{j\in J}$ denote  the points on $X$  whose closure in $\XX$  intersects the special fibre $\XX_0$ at  a singular point  $P$. These points are said to {\em specialize} to $P$.  Then one always has 
$$\sum_j \delta_{P^j} \leq \delta_{P}$$
Now suppose $\sum_j \delta_{P^j} < \delta_{P}$ for some singularity $P$. From \eqref{genspec} one has  
\begin{itemize}
\item $\sum_j \delta_{P^j} =\delta_{P}-1$ 
\item $\sum_t \delta_{R^t} = \delta_{R}$ for all other singularities $R$. 
\end{itemize}

In particular, since we have assumed there are at least $2$ nodes, there is a node $R$ such that  $\sum_t \delta_{R^t} = \delta_{R}$. We claim that all the singularities $R^t$ which specialize to $R$ are nodes. 

Since  $R$ is a  node, it is of the form $y^2=x^{2k(R)}$ with  $\delta_R=k(R)$. Let   $(a_{t},0)$ with $n_{t}>1$ be the singular points on the generic fibre which specialize to  $R$, where locally the curve looks like 
$$y^2=\prod_t (x-a_t)^{n_t} $$
with $\sum_t {n_t}=2k(R)$.  If $n_t$ is odd and greater than $1$, then from the remarks above the singularity $(a_t,0)$ will have $\delta_{(a_t,0)}=\frac{(n_t-1)}{2}$.  Further, as $\sum_t n_t=2k(R)$ is even, there will have to be at least $2$  points with odd exponent. So in that case we have 
$$\sum_t \delta_{(a_t,0)}= k(R)-1.$$ 
This contradicts the assumption that 
$$\sum_t \delta_{R^t}=\delta_R=k(R)$$
Hence for this node, all the points $R^t$ have to be nodes, so we have at least one node on the generic fibre. 
 
\end{proof}

\subsection{ The Higher Chow Cycle}

We can apply the results of the previous section to $\TQ_1 \cup \TQ_2$ and so  we may assume that there is an irreducible curve $\TQQ=\TQQ^{\Delta}$ defined over some extension of the Witt vectors of the residue field such that the special fibre is $\TQ_1 \cup \TQ_2$. Further, there is a node $R$ on the special fibre such that 
$$\sum_t \delta_{R^j}=\delta_R$$
where $R^j$ denote the points on the generic fibre specializing to $R$ and we can assume $R$ lies on the intersection of the two components as $\TQ_i$ are non-singular outside the image of $0$.  There is another singular point $P \neq R$ such that 
$$\sum_t \delta_{P^t}=\delta_P-1$$

\begin{lem} Let $R$ be a node in the special fibre $\TQ_1 \cup \TQ_2$ such that
\begin{equation}
\sum_j \delta_{R^j}=\delta_R
\label{delta}
\end{equation}
Let $R'$ be a point lying over $R$ under a blow up of a section $\bar{R}^j$ of $\TQQ$  specializing to $R$ and $R^j$. Let $R'^t$ denote the points in the generic fibre specializing to $R'$. Then 
$$\sum_t \delta_{R'^t}=\delta_{R'}$$ 
\label{blowup}
\end{lem}

\begin{proof}
From Lemma \ref{nodelemma}, we know that the $R^j$s are nodes. Consider a single blow up of the closure $\bar{R}^1$ of a  point $R^1$ in the model.  Since $R^1$ is a node there is either a node of one  lower order or two non-singular points lying over $R^1$.  Suppose it is a lower order node $R'^1$. Recall, 
$$\delta_{R^1}=\sum_{Q}  \frac{m_Q(m_Q-1)}{2}$$
where $Q$ runs through all the infinitely near points of $R^1$ including $R^1$. Hence in this case, 
$$\delta_{R^1}=\delta_{R'^1}+2(2-1)/2=1+\delta_{R'^1}$$
The same holds in the special fibre -- under the blow up the point $R'$ lying over $R$  is a node of one lower order. Hence the formula 
$$\sum_j \delta_{R'^j}=\delta_{R'}$$
continues to hold as the other points $R^j$ with $j \neq 1$ are not affected by the blow up. 

This formula also  holds when there are two smooth points over $R^1$ as in that case $R^1$ is an ordinary node so $\delta_{R^1}=1$ and $\delta$ of a smooth point is $0$.  

\end{proof}

\begin{cor} If $R^N$ is a point on the generic fibre of the normalization lying over a node $R^1$ which specializes to a node $R$ in the special fibre satisfying equation \eqref{delta} above, then $R^N$ specializes to a non-singular point on the special fibre of the normalization. \label{corblowup}
\end{cor} 
\begin{proof}

Iterating the process above, we see that as long as there is a singular point $R'$ lying over $R$ - which has to be a node -  there are nodal points $R'^j$ specializing to $R'$. In the normalization, there are no singular points in the generic fibre, hence a point on the generic fibre specializing to a point  lying over $R$ can no longer be singular. 

\end{proof}

Let $\Psi:{\mathcal N}^{\Delta}  \rightarrow \TQQ^{\Delta}$ denote the normalization of $\TQQ$ and $\Psi:N_{\eta} \rightarrow \TQ_{\eta}$ the generic fibre of ${\mathcal N}^{\Delta}$. Let $R^{\TQ}$ be a node of $\TQ_{\eta}$ specializing to $R$. Under the normalization, there are two non-singular points $R^{N}_1$ and $R^{N}_2$ mapping to $R^{\TQ}$.  Since $\TQ_{\eta}$ is rational, $N_{\eta}$ is a smooth rational curve. Hence there is a function $f_R$ on $N_{\eta}$ such that 
$$\div(f_R)=R^N_1 -R^N_2$$
The points $R^N_1$ and $R^N_2$ are defined over some finite extension $\BL_p$ of $\K_p$.  We have $\Psi_*((N_{\eta},f_R))$ is an element of $CH^2(\TK_{\A_{\BL_p}},1)$ as, by construction 
$$\div(\Psi_*(f_R))=\Psi(R^N_1)-\Psi(R^N_2)=R^{\TQ}-R^{\TQ}=0$$


Let  $\Xi^{\Delta}_{\TK_{A_{\BL_p}}}$ be the element $\Psi_*((N_{\eta},f_R))$. One can push forward $\Xi^{\Delta}_{\TK_{A_{\BL_p}}}$ under the map $\nu$ to get an element
$$\Xi^{\Delta}_{K_{A_{\BL_p}}}=\nu_*(\Xi_{\TK_{A_{\BL_p}}})$$
 of $CH^2(K_{A_{\BL_p}},1)$ and pull it back under $\phi$ and apply the norm  to get an element  
$$\Xi^{\Delta}_A=N_{\BL_p/\K_p}(\phi^*(\Xi_{K_A})) \in CH^2(A,1).$$

\subsection{Indecomposability and the boundary.}

We would like to show that the boundary map in the localisation sequence is surjective. In the preceding section we have shown that, given a cycle in $\Pic(\A_p)$ of Humbert invariant $\Delta$  which is not the restriction of the closure of an element of $\Pic(A)$, there is an  an element $\Xi^{\Delta}_A$ in $CH^2(A,1)$. In this section we compute the boundary of this element.

 Let $D^{\Delta}_1$ and $D^{\Delta}_2$ be the  pull backs of the cycles $\QD_1$ and $\QD_2$ to $\A_p$. 

\begin{thm} The element $\Xi^{\Delta}_A$ is an indecomposable element of $CH^2(A,1)$. Further, the boundary under the map $\partial$ is the extra cycle in the special fibre $\A_p$ --
$$\partial(\Xi^{\Delta}_A)=mD^{\Delta}_1$$
for some $m \neq 0$, up to the boundary of an element of $CH_{dec}^2(A,1)$.

\label{higherchow}

\end{thm}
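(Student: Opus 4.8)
The plan is to prove the two assertions in the reverse of the order in which they are stated: I would first compute the boundary $\partial(\Xi_A)$, and then deduce indecomposability as a formal consequence. The boundary computation is the heart of the matter, and I would organize it around the three operations $N_{\BL_p/\K_p}\circ\phi^{*}\circ\rho_{*}$ used to build $\Xi_A$ out of $\Xi_{\TK_A}=\psi_{*}(N_P,f_P)$. Since $\partial$ is functorial for proper push-forward ($\rho$), flat pull-back ($\phi$, which is finite flat of degree $2$), and the norm map — all of which commute with the localization sequences — it suffices to compute $\partial(\Xi_{\TK_A})$ on a model of $\TK_A$ and then transport the answer by $N_{\BL_p/\K_p}\circ\phi^{*}\circ\rho_{*}$ on special fibres.

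For the computation on $\TK_A$ I regard $\Xi_{\TK_A}$ as the pair $(\TQ,g)$, where $g$ is the rational function which, pulled back to the normalization $N_P\cong\mathbb P^1$, has divisor $P_1-P_2$, the two points lying over the (blown-up) node $P$. By definition $\partial(\TQ,g)=\div_{\bar{\TQ}}(g)$, the divisor of $g$ on the closure $\bar{\TQ}$ in the model, whose vertical part is supported on the reduction $\TQ_1\cup\TQ_2$ together with the exceptional components of a regular semistable model. The essential input is that $\TQ$ reduces to $\TQ_1\cup\TQ_2$ with the zero $P_1$ of $g$ specializing into $\TQ_1$ and the pole $P_2$ into $\TQ_2$. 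Writing $\div_{\bar{\TQ}}(g)=\overline{P_1}-\overline{P_2}+a_1\TQ_1+a_2\TQ_2+(\text{exceptional})$ and using that a principal divisor meets every vertical fibre component in degree zero, the intersection data $\overline{P_1}\cdot\TQ_1=1$, $\overline{P_2}\cdot\TQ_2=1$ (and vanishing cross terms) force the difference $a_1-a_2$ to be nonzero; the asymmetry between the specializations of the zero and the pole is exactly what produces this nonzero difference.

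Passing to $CH^1(\TK_{A,p})$, the exceptional components contract to points and contribute $0$, so $\partial(\Xi_{\TK_A})=a_1[\TQ_1]+a_2[\TQ_2]$. Because $\TQ$ is defined on the generic fibre with $\TQ_1+\TQ_2\in|\TCC_0^{\,N}|$, the class $[\TQ_1]+[\TQ_2]$ is the reduction of a generic-fibre cycle and is fixed by the involution $\iota$, hence lies in the principal part; modulo this (decomposable) class one gets $\partial(\Xi_{\TK_A})\equiv(a_1-a_2)[\TQ_1]$ with $a_1-a_2\neq 0$. Now $\rho_{*}$ sends $[\TQ_1]$ to $[Q_1]$, then $\phi^{*}$ sends $[Q_1]$ to $[D_1]$ since $D_1=\phi^{-1}(Q_1)$, and the norm contributes a nonzero integer factor; collecting all these constants into $m$ gives $\partial(\Xi_A)=m\,D_{1p}$ with $m\neq 0$, up to a decomposable element. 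Indecomposability is then immediate: the boundary of any decomposable element $(D,a)$ is $\ord_p(a)\,\mathcal D_p$, the reduction of a generic-fibre cycle, and since $NS(A)\otimes\Q\simeq\Q$ is spanned only by the principal polarization, all such boundaries lie in the principal part $\Q[C_p]$ of $NS(\A_p)\otimes\Q\simeq\Q(\sqrt{\Delta})$. But $D_{1p}$ is the extra generator, lying in the complementary $\sqrt{\Delta}$-direction, so $m\,D_{1p}$ with $m\neq 0$ cannot be the boundary of a decomposable element; hence $\Xi_A$ is indecomposable.

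The main obstacle is the middle step — establishing $a_1\neq a_2$ rigorously. The naive model carrying only $\TQ_1,\TQ_2$ as fibre components is inconsistent: the $\TQ_i$ meet in several points, so the integrality of the intersection equations fails unless one inserts the exceptional chains arising from resolving the multiple intersection points and the node. One must therefore work on an honest regular semistable model, verify that every exceptional chain dies in $CH^1$ after applying $\rho_{*}$, and confirm that the linking contribution of the separating node survives and does not cancel against the exceptional contributions. Controlling this combinatorics, and checking that the surviving coefficient $a_1-a_2$ is genuinely nonzero, is the delicate part; everything else reduces to functoriality of $\partial$ and the already-established structure of $NS(\A_p)$.
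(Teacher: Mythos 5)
Your proposal follows essentially the same route as the paper's own proof: both compute the boundary on a model of the nodal curve by using that a principal divisor is orthogonal to every vertical component, both extract the nonzero coefficient from the asymmetric specialization of the zero $P_1$ into $\TQ_1$ and the pole $P_2$ into $\TQ_2$ (with the exceptional chain contributions cancelling), and both deduce indecomposability from the fact that $D_1$ is not the reduction of a generic-fibre cycle since $NS(A)\otimes\Q\simeq\Q$. The only cosmetic difference is that the paper normalizes the coefficient of $\TQ_2^P$ to zero by subtracting the decomposable element $(N_P,p)$ and then solves explicitly for $a=2/((\TQ_1^P.\TQ_2^P)-(\TQ_1^P)^2)$, whereas you work directly with the difference $a_1-a_2$.
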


\begin{proof}

Recall that from Remark \ref{indres}, to show indecomposability it suffices to show the boundary of $\Xi^{\Delta}_{A}$ is a non-zero multiple of the cycle $D^{\Delta}_1$ as $D^{\Delta}_1$ does not deform to a cycle on the generic fibre -- hence cannot be the boundary of a decomposable element. We observe that we can work with the cycle  $(N^{\Delta}_{\eta},f_R)$ since computing the boundary commutes with proper push forward and so  it suffices to show that this boundary is a non-zero multiple of one of the components of its special fibre. 

Consider the curve ${\mathcal N}^{\Delta}$. Since ${\mathcal N}^{\Delta}$ is a non-singular rational curve,  the special fibre is either a non-singular curve or the union of two non-singular rational curves meeting at a point. Since ${\mathcal N}^{\Delta}$ is the normalization of $\TQQ^{\Delta}$ and  we know the special fibre of $\TQQ^{\Delta}$ consists of two curves, the special fibre of ${\mathcal N}^{\Delta}$ has to consist of two non-singular rational curves $N^{\Delta}_1$ and $N^{\Delta}_2$ meeting transversally at a point. 
$${\mathcal N}^{\Delta}_p=N^{\Delta}_1 \cup N^{\Delta}_2$$
Recall that we have an involution $\iota$ on $\TQ_1 \cup \TQ_2$ induced by the double cover.  A local calculation shows that that $\iota$ extends to the normalization $N^{\Delta}_1 \cup N^{\Delta}_2$  and one has 
$$\iota(N^{\Delta}_1)=N^{\Delta}_2.$$
\begin{lem} If $R_1$ and $R_2$ are two points of ${\mathcal N}^{\Delta}_p$ lying over a node $R$ of $\TQ_1 \cup \TQ_2$, one has 
$$\iota(R_1)=R_2.$$
and vice versa.  
\end{lem}
\begin{proof} The node $R$ is locally isomorphic  to the singularity at the origin given by $y^2=x^{2k}$. The involution $\iota$ acts by $\iota(x,y)=(x,-y)$.  In one affine part the equations defining the blow-up are $y=ux$ and $y^2=x^{2k}$ which is equivalent to  $u^2=x^{2k-2}$ and $x=0$, with the strict transform  of $y^2=x^{2k}$ given by $u^2=x^{2k-2}$.  This  is still singular if $2k-2>0$. The involution on the strict transform is given by $\iota((x,u))=(x,-u)$. In the normalisation, the equations are  $u^2=1$  and $x=0$. The involution acts the same way hence it takes the point $(0,1)$ to $(0,-1)$ and vice versa.  In our case, the points $R_1$ and $R_2$ correspond to the points $(0,1)$ and $(0,-1)$ and hence the involution interchanges them. 

\end{proof}

\begin{lem} $R^N_1$ and $R^N_2$ specialise to distinct points $R_1$ and $R_2$ which lie on different components of  the special fibre $N^{\Delta}_1 \cup N^{\Delta}_2$. \label{specialise}
\end{lem}

\begin{proof}

Suppose not. Assume both $R_1$ and $R_2$ lie on  $N^{\Delta}_1$. Then since $\Psi(R_1)=\Psi(R_2)=R$ one has  $\iota(R_1)=R_2$. Since  $\iota(N^{\Delta}_1)=N^{\Delta}_2$ , the point $R_2$ lies on $N^{\Delta}_1 \cap N^{\Delta}_2$.  So $R_2$ is a singular point. This contradicts Corollary \ref{corblowup} as $R$  is a  node satisfying the conditions of  Lemma \ref{blowup}.  
\end{proof}

Hence we can assume $R_1$ lies on $N^{\Delta}_1$ and $R_2$ lies on $N^{\Delta}_2$. We now compute $\partial ((N_{\eta}^{\Delta},f_R))$. From the definition of $\partial$ 
$$\partial ((N^{\Delta}_{\eta},f_R))=\div(\bar{f_R})=\HH+aN^{\Delta}_1+bN^{\Delta}_2$$
for some integers $a$ and $b$, where $\HH=\overline{\div (f_R)}$ is the closure of the horizontal divisor $R^N_1-R^N_2$.  A decomposable element of the form $(N_{\eta}^{\Delta},p^{k})$ has boundary 
$$\partial((N^{\Delta}_{\eta},p^k))=k (N^{\Delta}_1 + N^{\Delta}_2)$$
Hence by adding such an element with $k=-b$ to $(N^{\Delta}_{\eta},f_R)$  we may assume that $b=0$ and the boundary is of the form 
$$\partial((N^{\Delta}_{\eta},f_R))=\HH+aN^{\Delta}_ 1.$$
We now show $a \neq 0$.  From the intersection theory of arithmetic surfaces -- described in Lang \cite{lang}, chapter III, for instance -- we have that 
$$\deg( \div(\bar{f_R}|_D))=(\div(\bar{f_R}).D)=0$$
for {\em all} divisors $D$ supported in the special fibre.  Applying this with $D=N^{\Delta}_2$ we have 
$$( \HH.N^{\Delta}_2)+a(N^{\Delta}_1.N^{\Delta}_2)=0$$
From the Lemma \ref{specialise}, $\HH \cap N^{\Delta}_2=-R_2$ and so 
$$(\HH.N^{\Delta}_2)=-1.$$
From the fact that the components of the special fibre meet at precisely one point $\TP$ with multiplicity one, we  have  
$$(N^{\Delta}_1.N^{\Delta}_2)=1.$$
Hence $-1+a=0$ so $a=1$. So we have 
$$\partial((N^{\Delta}_{\eta},f_R))=\HH+N^{\Delta}_1.$$
Under the pushforward, $\Psi_*(\HH)=0$ and $\Psi_*(N^{\Delta}_1)=\QD_1$ so we have 
$$\partial(\Xi^{\Delta}_{\TK_{A_{\BL_p}}})=\QD_1$$
Combining this with the pullback to $A$ and the norm map - which could introduce a non-zero scaling factor -  we have that 
$$\partial(\Xi^{\Delta}_A)=kD^{\Delta}_1$$
 for some $k \neq 0$.  All the calculations are only up to the boundary of a decomposable  element.

\end{proof}

Returning to our original situation, from the work of Birkenhake and Wilhelm \cite{biwi} we know that $D^{\Delta}_1 \in  |\LL_0^a \otimes \LL^b_{\Delta}|$ for some $b \neq 0$ while $\D_p \in |\LD|$  so by modifying $\Xi^{\Delta}_A$ by a suitable decomposable element with can construct an element  $\Xi_{A,\D_p}$ such that 
$$\partial (\Xi_{A,\D_p})=m\D_p$$ 
for some $m \neq 0$.

\section{The Hodge-$\D$-conjecture for Abelian surfaces.}

The Hodge-$\D$-conjecture asserts that the boundary map in the localisation sequence from the higher Chow group of the generic fibre to the Chow group of the special fibre is surjective. Hence, given a cycle in the special fibre, we should be able to find a higher Chow cycle which bounds it. If the cycle in the special fibre is the restriction of a cycle in the generic fibre, then it is easily seen to be the boundary of a decomposable element of the higher Chow group.

In the previous section we have shown that, given a cycle in the special fibre which is not the restriction of a generic cycle, under certain circumstances we have constructed a higher Chow cycle which bounds it. It remains to combine this result with an analysis of the possible reductions of Abelian surfaces to get our final result.  

\begin{thm}
Let $A$ be an Abelian surface over a $p$-adic local field $\K_p$. Let $\A$ be a model over the ring of integers and $\A_p$ the special fibre. Assume $A$ has good, non-supersingular,  reduction at $p$. Then the map 
$$CH^2(A,1) \otimes \Q \stackrel{\partial}{\longrightarrow} CH^1(\A_p)\otimes \Q$$ 
is surjective. 

\end{thm}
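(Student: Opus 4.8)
The plan is to reduce the general surjectivity statement to the rank-two case that has just been handled. The map $\partial$ lands in $CH^1(\A_p)\otimes\Q \simeq NS(\A_p)\otimes\Q$, so it suffices to show every generator of the rational N\'eron--Severi group of the special fibre lies in the image. By the discussion in Section 3.2, since $A$ has good ordinary reduction the reduction $\A_p$ is an Abelian surface whose endomorphism algebra contains a CM field of degree $4$ and hence at least one real quadratic field, so $NS(\A_p)\otimes\Q$ has rank at least two. I would first dispose of the ``first generator'': the principal polarization $\LL_0$ is represented by the genus-$2$ curve $C$ with $A=J(C)$, and this curve spreads out over $\OO_{\K_p}$ and reduces to a cycle on $\A_p$. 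A cycle that is the reduction of a cycle on the generic fibre is easily realized as the boundary of a decomposable element $(C,a)$ with $\ord_p(a)\neq 0$, via the formula $\partial((D,a))=\ord_p(a)\mathcal{D}_p$ from Section 5.1. Thus the class of $C$ in $NS(\A_p)\otimes\Q$ is in the image of $\partial$.

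Next I would handle the extra generators, which is exactly the content of the indecomposable construction of the previous subsection. The preceding theorem produces, for each real quadratic field $\Q(\sqrt{\Delta})$ by which $\A_p$ has real multiplication, an element $\Xi_A \in CH^2(A,1)$ whose boundary $\partial(\Xi_A)$ equals $mD_{1p}$ for some $m\neq 0$, modulo a decomposable element; here $D_{1p}$ is the extra cycle spanning the new direction in $NS(\A_p)\otimes\Q$. Since the decomposable correction only contributes classes that are reductions of generic cycles (already shown to be in the image), and since $m\neq 0$, the class of $D_{1p}$ lies in the image of $\partial$ after tensoring with $\Q$. Together with the class of $C$, these span a rank-two subspace of $NS(\A_p)\otimes\Q$.

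To finish the higher-rank cases I would invoke the reduction explained in Section 3.2: when the Picard number of $\A_p$ is $3$ or $4$, this corresponds to real multiplication by two or three distinct real quadratic fields $\Q(\sqrt{\Delta_1}),\Q(\sqrt{\Delta_2}),\dots$, and one applies the construction of $\Xi_A$ separately to each $\Delta_i$ to produce the corresponding extra cycle in the image of $\partial$. Since rank $5$ does not occur and rank $6$ is the supersingular case excluded by the ordinarity hypothesis, this exhausts all possibilities, and the spanning cycles we have produced generate all of $NS(\A_p)\otimes\Q$. Hence $\partial$ is surjective.

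The main obstacle I anticipate is the bookkeeping in the higher-rank reduction: one must check that the several applications of the Birkenhake--Wilhelm construction, one per real quadratic subfield, genuinely produce \emph{linearly independent} classes that together with $[C]$ fill out the whole N\'eron--Severi space, rather than landing in a proper subspace. This hinges on the precise version of the Birkenhake--Wilhelm theorem identifying the divisor $D$ inside the linear system of $\LL_0^a\otimes\LL_\Delta^b$ with $b\neq 0$, which guarantees each construction contributes a class genuinely outside the span of $\LL_0$ and of the other $\LL_{\Delta_j}$. Verifying this independence carefully, and confirming that the ordinarity assumption is exactly what rules out the problematic supersingular rank-$6$ case, is where the real work lies; the reduction of the surjectivity statement to the single-$\Delta$ theorem is otherwise formal.
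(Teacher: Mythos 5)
Your overall strategy matches the paper's: reduce surjectivity to hitting a set of generators of $NS(\A_p)\otimes\Q$, bound the classes that lift to the generic fibre by decomposable elements $(D,a)$ with $\ord_p(a)\neq 0$, and bound each ``extra'' class by the indecomposable element $\Xi_A$ attached to the corresponding real quadratic field, repeating the construction once per quadratic subfield when the Picard number of the special fibre is $4$. The paper organizes exactly this as a case analysis on the pair $(\pic(A),\pic(\A_p))$, using the consequence of the Tate conjecture that $\pic(\A_p)$ is even (so the rank-$3$ case you mention does not occur) and that $\pic(\A_p)=6$ forces supersingular reduction, which ordinarity excludes.

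The genuine gap is that you never treat the case in which $A$ is not simple, i.e.\ is isogenous to a product of elliptic curves. The theorem is stated for an arbitrary abelian surface with good ordinary reduction, but the entire construction of $\Xi_A$ --- the Humbert/Birkenhake--Wilhelm rational curve on the Kummer plane, the lift of $\TQ_1\cup\TQ_2$ via Bogomolov--Hassett--Tschinkel, and so on --- is developed under the hypotheses that $A$ is simple and principally polarized with $NS(A)\otimes\Q\simeq\Q$ and that the extra classes on $\A_p$ arise from real multiplication. For $A$ isogenous to $E_1\times E_2$ these hypotheses fail, and the new classes in $NS(\A_p)$ may come from isogenies or complex multiplication between the reductions of the two factors rather than from a real quadratic field, so your argument produces no preimages there. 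The paper closes this case by invoking Spiess \cite{spie} and Collino \cite{coll} (its Case 3). By contrast, the linear-independence issue you flag in the rank-$4$ case is already resolved by the preceding theorem: the boundary of each $\Xi_A$ is a nonzero multiple of the specific generator $D_{1p}$ of the primitive part in the chosen direction, modulo classes that lift to the generic fibre, so the several constructions hit the chosen basis vectors directly rather than merely landing outside a subspace.
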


\begin{proof} The results of the previous sections show the following  - suppose $A$, $\A$ and $\A_p$ are as before and suppose $\D_p$ is a cycle representing an element of $NS(\A_p)$ which is not the restriction of cycle in $NS(A)$. Then, from Theorem \ref{higherchow} there is a higher Chow cycle $\Xi_{A,\D_p}$ such that  
$$\partial(\Xi_{A,\D_p})=m\D_p$$
for some $m\neq 0$. 

Since we are looking at the rational Chow group, it suffices to consider the different cases up to isogeny. We observe that there are three possible cases -- depending on where $A$ or $\A_p$ is simple : 

\begin{itemize}

\item  $A$ simple and $\A_p$ simple, non-supersingular -- Here for each element $\D_p$  of $NS(\A_p)$ which is not the restriction of a generic cycle, we have a higher Chow cycle $\Xi_{A,\D_p} $ from Theorem \ref{higherchow} which bounds $\D_p$. 

\item  $A$ simple and $\A_p\simeq \E_{1,p} \times \E_{2,p}$ -- In this case, the Humbert invariant of the extra cycle $\D_p$ is $n^2$ for some $n$. If $n=1$ we can use the element constructed by Collino \cite{coll}. If $n>1$ the argument above applies. 

\item  $A \simeq E_1 \times E_2$ -- This case was covered by Spiess\cite{spie}.

\end{itemize}

\end{proof}

\begin{rem} We needed to assume non super-singular in order to use the theorem of Bogomolov-Hassett-Tschinkel. In the case of super-singular reduction, the special fibre is unirational and that allows the possibility of the cycle being deformed within the special fibre. 
\end{rem}
\begin{rem} This theorem can  be extended to work in the case when $A$ has  semi-stable reduction. A special case was studied in \cite{sree2}.

\end{rem}

\section{Some applications}

\subsection{Torsion in co-dimension two}

One immediate consequence of the construction of these higher Chow cycles is the following. Let $X$ be a smooth projective variety over a local field $K$ and assume that it has a semi-stable model over the ring of integers $\OO_K$.  Let $\Sigma_X$ be the group 
$$\Sigma_X:=\Ker(CH^2(\XX) \longrightarrow CH^2(X))$$
Then for $X$ an Abelian surface $A$ with non super-singular reduction over a $p$-adic field, $\Sigma_A$ is torsion. This is because the long exact localization sequence gives 
$$\dots \longrightarrow CH^2(A,1) \stackrel{\partial}{\longrightarrow} CH^1(\A_p) \longrightarrow CH^2(\A) \longrightarrow CH^2(A)\longrightarrow 0$$
hence the group $\Sigma_A$ is the same as the cokernel of the image of $\partial$. As we have shown that the $\partial\otimes \Q$ is surjective, this implies $\Sigma_A\otimes \Q=0$ -- hence $\Sigma_A$ is torsion. 

There are a lot of consequences of the finiteness of $\Sigma_A$ --  they are described in the paper of Spiess \cite{spie}. For example, one has 
\begin{cor}
Let $A$ be an Abelian surface over $K$ with good, ordinary, reduction. Let $\A_p$ denote its reduction mod p. Then 
\begin{itemize}
\item $CH_0(A)(\text{prime to } p) \simeq CH_0(\A_p)(\text{prime to }p)$ and in particular it is finite. 
\item The group $\Sigma_A$ is a $p$-group. 
\item For every integer $n\neq 0$ which is prime to $p$ the cycle map 
$$cl_n:CH^2(A)/n \longrightarrow H^4(A,\ZZ/n(2))$$
is injective.
\end{itemize}
\end{cor}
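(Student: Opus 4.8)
The plan is to treat this corollary as a formal consequence of the surjectivity established in the previous theorem, fed into the general machinery of Spiess \cite{spie}, Section 4. The single genuinely new input from this paper is the surjectivity of $\partial \otimes \Q$; once that is in hand, all three assertions reduce to the finiteness, and indeed the $p$-primary nature, of $\Sigma_A$.

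First I would extract finiteness of $\Sigma_A$ from the surjectivity statement. Reading off the localization sequence
$$\dots \longrightarrow CH^2(A,1) \stackrel{\partial}{\longrightarrow} CH^1(\A_p) \longrightarrow CH^2(\A) \longrightarrow CH^2(A) \longrightarrow 0,$$
one identifies $\Sigma_A$ with the cokernel of $\partial$. The previous theorem gives that $\partial \otimes \Q$ is onto, so $\Sigma_A \otimes \Q = 0$ and $\Sigma_A$ is torsion; in the local setting this torsion group is finite, which is precisely the implication from surjectivity to finiteness recorded in the introduction and carried out in \cite{spie}, Section 4. This finiteness is the engine driving the rest.

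Next I would dispose of the second bullet, that $\Sigma_A$ is a $p$-group, by showing its prime-to-$p$ part vanishes. Here the good-reduction hypothesis enters: for $\ell \neq p$, smooth and proper base change identifies the $\ell$-adic cohomology of $A$ with that of $\A_p$, and the induced specialization map on prime-to-$p$ torsion cycles is an isomorphism, so no prime-to-$p$ class can lie in the kernel $\Sigma_A$. Granting the $p$-group structure, the first and third bullets follow formally by the diagram chases of \cite{spie}: since $\dim A = 2$ we have $CH_0(A) = CH^2(A)$, the kernel of the specialization map $CH_0(A)(\text{non-}p) \to CH_0(\A_p)(\text{non-}p)$ is governed by the now-trivial prime-to-$p$ part of $\Sigma_A$ while its surjectivity comes from lifting closed points off the smooth special fibre, and the injectivity of $cl_n$ for $n$ prime to $p$ is read off by comparing $CH^2(A)/n$ with $H^4(A,\ZZ/n(2))$ through the cycle map, the relevant torsion being controlled by the same finiteness.

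Because the substantive work is already done, the main obstacle is not computational but conceptual: it lies in the prime-to-$p$ comparison of the second bullet, where one must invoke smooth proper base change to guarantee that specialization is an isomorphism away from $p$. This is exactly the step that uses good reduction, and it is what prevents the prime-to-$p$ torsion from persisting; everything else is bookkeeping within Spiess's framework.
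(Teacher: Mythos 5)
Your proposal matches the paper's own proof, which simply cites Spiess \cite{spie}, Section 4, and records that the corollary is a consequence of the finiteness of $\Sigma_A$ (itself extracted, exactly as you do, from the surjectivity of $\partial\otimes\Q$ via the localization sequence). The one input the paper flags that you do not make explicit is that Spiess's machinery also relies on the Tate conjecture for Abelian surfaces over finite fields; your appeal to smooth proper base change is not quite a substitute for that, but since you are deferring to \cite{spie} for the diagram chases anyway, this is a minor omission rather than a gap.
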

\begin{proof}\cite{spie} - Section 4. This is a consequence of the finiteness of $\Sigma_A$  and the fact that the Tate conjecture is known for Abelian surfaces over a finite field. 
\end{proof}

\subsection{Relations between CM cycles}

In the geometric settings the work of Mori and Mukai \cite{momu} can be used in a similar manner to deform sums of  rational curves on a particular $K3$ surface  to  rational curves on the generic fibre of the moduli of $K3$ surfaces. An argument similar to the one above can then be used to construct indecomposable  higher chow cycles in the generic Kummer surface of an Abelian surface over the Siegel modular threefold and hence can be lifted to the generic Abelian surface. 

Similar to the case studied here, these cycles degenerate over Humbert surfaces. This is a generalization of the work of Collino \cite{coll} where he constructs a higher Chow cycle which degenerates over the moduli of products of elliptic curves - namely the Humbert surface of invariant 1. In \cite{sree1}, Collino's elements were used to construct relations between CM cycles - certain codimension two cycles in the CM fibres over modular and Shimura curves. These new elements can be used to get more relations.  This in more detail will be the subject of another paper.

\subsection{$K3$ surfaces}

Since the key point of our construction was done on $K3$ surfaces and the result we used holds over fairly general $K3$ surfaces, we expect that out construction could be used to prove the non-Archimedean Hodge-$\D$-conjecture for such $K3$ surfaces. 

\bibliographystyle{alpha}
\bibliography{referencesAB}

\begin{thebibliography}{BHT11}

\bibitem[Art69]{arti}
M.~Artin.
\newblock Algebraic approximation of structures over complete local rings.
\newblock {\em Inst. Hautes \'Etudes Sci. Publ. Math.}, (36):23--58, 1969.

\bibitem[AS07]{assa}
Masanori Asakura and Shuji Saito.
\newblock Surfaces over a {$p$}-adic field with infinite torsion in the {C}how
  group of 0-cycles.
\newblock {\em Algebra Number Theory}, 1(2):163--181, 2007.

\bibitem[BHT11]{BHT}
Fedor Bogomolov, Brendan Hassett, and Yuri Tschinkel.
\newblock Constructing rational curves on {K}3 surfaces.
\newblock {\em Duke Math. J.}, 157(3):535--550, 2011.

\bibitem[Blo86]{bloc}
Spencer Bloch.
\newblock Algebraic cycles and higher {$K$}-theory.
\newblock {\em Adv. in Math.}, 61(3):267--304, 1986.

\bibitem[BW03]{biwi}
Christina Birkenhake and Hannes Wilhelm.
\newblock Humbert surfaces and the {K}ummer plane.
\newblock {\em Trans. Amer. Math. Soc.}, 355(5):1819--1841 (electronic), 2003.

\bibitem[CL05]{chle}
Xi~Chen and James~D. Lewis.
\newblock The {H}odge-{${\mathcal D}$}-conjecture for {$K3$} and abelian
  surfaces.
\newblock {\em J. Algebraic Geom.}, 14(2):213--240, 2005.

\bibitem[Col97]{coll}
A.~Collino.
\newblock Griffiths' infinitesimal invariant and higher {$K$}-theory on
  hyperelliptic {J}acobians.
\newblock {\em J. Algebraic Geom.}, 6(3):393--415, 1997.

\bibitem[Con98]{cons}
Caterina Consani.
\newblock Double complexes and {E}uler {$L$}-factors.
\newblock {\em Compositio Math.}, 111(3):323--358, 1998.

\bibitem[Elk74]{elki}
Ren{\'e}e Elkik.
\newblock Solution d'\'equations au-dessus d'anneaux hens\'eliens.
\newblock In {\em Quelques probl\`emes de modules ({S}\'em. {G}\'eom. {A}nal.,
  \'{E}cole {N}orm. {S}up., {P}aris, 1971-1972)}, pages 116--132. Ast\'erisque,
  No. 16. Soc. Math. France, Paris, 1974.

\bibitem[Fak]{fakh}
N.~Fakhruddin.
\newblock Personal communication.

\bibitem[Fla92]{flac}
Matthias Flach.
\newblock A finiteness theorem for the symmetric square of an elliptic curve.
\newblock {\em Invent. Math.}, 109(2):307--327, 1992.

\bibitem[Har10]{hart}
Robin Hartshorne.
\newblock {\em Deformation theory}, volume 257 of {\em Graduate Texts in
  Mathematics}.
\newblock Springer, New York, 2010.

\bibitem[Jak94]{jako}
Bernd Jakob.
\newblock Poncelet {$5$}-gons and abelian surfaces.
\newblock {\em Manuscripta Math.}, 83(2):183--198, 1994.

\bibitem[Jan88]{jann}
Uwe Jannsen.
\newblock Deligne homology, {H}odge-{${\mathcal D}$}-conjecture, and motives.
\newblock In {\em Be\u\i linson's conjectures on special values of
  $L$-functions}, volume~4 of {\em Perspect. Math.}, pages 305--372. Academic
  Press, Boston, MA, 1988.

\bibitem[Lan88]{lang}
Serge Lang.
\newblock {\em Introduction to {A}rakelov theory}.
\newblock Springer-Verlag, New York, 1988.

\bibitem[Mil92]{mild}
Stephen J.~M. Mildenhall.
\newblock Cycles in a product of elliptic curves, and a group analogous to the
  class group.
\newblock {\em Duke Math. J.}, 67(2):387--406, 1992.

\bibitem[MM83]{momu}
Shigefumi Mori and Shigeru Mukai.
\newblock The uniruledness of the moduli space of curves of genus {$11$}.
\newblock In {\em Algebraic geometry ({T}okyo/{K}yoto, 1982)}, volume 1016 of
  {\em Lecture Notes in Math.}, pages 334--353. Springer, Berlin, 1983.

\bibitem[MS97]{stac}
Stefan~J. M{\"u}ller-Stach.
\newblock Constructing indecomposable motivic cohomology classes on algebraic
  surfaces.
\newblock {\em J. Algebraic Geom.}, 6(3):513--543, 1997.

\bibitem[Ram89]{rama}
Dinakar Ramakrishnan.
\newblock Regulators, algebraic cycles, and values of {$L$}-functions.
\newblock In {\em Algebraic $K$-theory and algebraic number theory (Honolulu,
  HI, 1987)}, volume~83 of {\em Contemp. Math.}, pages 183--310. Amer. Math.
  Soc., Providence, RI, 1989.

\bibitem[Spi99]{spie}
Michael Spiess.
\newblock On indecomposable elements of {$K\sb 1$} of a product of elliptic
  curves.
\newblock {\em $K$-Theory}, 17(4):363--383, 1999.

\bibitem[Sre01]{sree1}
Ramesh Sreekantan.
\newblock Relations among {H}eegner cycles on families of abelian surfaces.
\newblock {\em Compositio Math.}, 127(3):243--271, 2001.

\bibitem[Sre08]{sree2}
Ramesh Sreekantan.
\newblock A non-{A}rchimedean analogue of the {H}odge-{${\mathcal
  D}$}-conjecture for products of elliptic curves.
\newblock {\em J. Algebraic Geom.}, 17(4):781--798, 2008.

\bibitem[Sre10]{sree3}
Ramesh Sreekantan.
\newblock {$K_1$} of products of {D}rinfeld modular curves and special values
  of {$L$}-functions.
\newblock {\em Compos. Math.}, 146(4):886--918, 2010.

\bibitem[Tat66]{tate}
John Tate.
\newblock Endomorphisms of abelian varieties over finite fields.
\newblock {\em Invent. Math.}, 2:134--144, 1966.

\end{thebibliography}

\end{document}